\newcommand{\A}{\mathcal{A}}
\DeclareMathOperator{\RE}{Re} \DeclareMathOperator{\IM}{Im}
\numberwithin{equation}{section}
\newtheorem{theorem}{Theorem}[section]
\newtheorem{lemma}[theorem]{Lemma}
\newtheorem{corollary}[theorem]{Corollary}
\theoremstyle{remark}
\begin{document}
\title[Radius of Starlikeness]{Starlikeness of Analytic Functions\\ with Subordinate Ratios}

\author[R. M. Ali]{Rosihan M. Ali}
\address{School of Mathematical Sciences, Universiti Sains Malaysia,
11800 USM, Penang, Malaysia}
\email{rosihan@usm.my}

\author[K. Sharma]{Kanika Sharma}
\address{Department of Mathematics, Atma Ram Sanatan Dharma College,
University of Delhi, Delhi--110 021, India}
\email{ksharma@arsd.du.ac.in; kanika.divika@gmail.com}

\author{V. Ravichandran}
\address{Department of Mathematics, National Institute of Technology,
Tiruchirappalli--620 015, India} \email{ravic@nitt.edu; vravi68@gmail.com}

\begin{abstract}
Let $h$ be a non-vanishing analytic function in the open unit disc with $h(0)=1$. Consider the class consisting of normalized analytic functions $f$ whose ratios $f(z)/g(z)$, $g(z)/z p(z)$, and $p(z)$ are each subordinate to $h$ for some analytic functions $g$ and $p$. The radius of starlikeness is obtained for this class when $h$ is chosen to be either $h(z)=\sqrt{1+z}$ or $h(z)=e^z$. Further $\mathcal{G}$-radius is also obtained for each of these two classes when $\mathcal{G}$ is a particular widely studied subclass of starlike functions. These include $\mathcal{G}$ consisting of the Janowski starlike functions, and functions which are parabolic starlike.
%When $p$ is the constant one function, the functions $f$ are shown to be closely related to logharmonic mappings in the unit disc. Thus, as applications, we compute the radius of starlikeness for these classes of logharmonic mappings.
\end{abstract}
\keywords{Starlike functions; subordination; radius of starlikeness.}
\subjclass[2020]{30C80, 30C45}

\maketitle
\section{Classes of Analytic Functions}
Let $\mathcal{A}$ denote the class of normalized analytic functions $f(z)=z+\sum_{k=2}^{\infty}a_kz^k$ in the unit disc $\mathbb{D}=\{z\in \mathbb{C}:|z|<1\}$. A prominent subclass of $\mathcal{A}$ is the class $\mathcal{S}^*$ consisting of functions $f \in \mathcal{A}$ such that $f(\mathbb{D})$ is a starlike domain with respect to the origin. Geometrically, this means the linear segment joining the origin to every other point $w \in f(\mathbb{D})$ lies entirely in $f(\mathbb{D})$. Every starlike function in $\mathcal{A}$ is necessarily univalent.

Since $f'(0)$ does not vanish, every function $f\in\mathcal{A}$ is locally univalent at $z=0$. Further, each function $f\in\mathcal{A}$ mirrors the identity mapping near the origin, and thus in particular, maps small circles $|z|=r$ onto curves which bound starlike domains. If $f\in\mathcal{A}$ is also required to be univalent in $\mathbb{D}$, then it is known that $f$ maps the disc $|z| < r$ onto a domain starlike with respect to the origin for every $r \leq r_{0}:=\tanh(\pi/4)$ (see\cite[Corollary, p.\ 98]{duren}). The constant $r_{0}$ cannot be improved. Denoting by $\mathcal{S}$ the class of univalent functions $f\in\mathcal{A}$, the number $r_{0}=\tanh(\pi/4)$ is commonly referred to as the radius of starlikeness for the class $\mathcal{S}$.

Another informative description of the class $\mathcal{S}$ is its radius of convexity. Here it is known that every $f \in \mathcal{S}$ maps the disc $|z| < r$ onto a convex domain for every $r \leq r_{0}:=2 - \sqrt{3}$ \cite[Corollary, p.\ 44]{duren}. Thus the radius of convexity for $\mathcal{S}$ is $r_{0}=2 - \sqrt{3}$.

To formulate a radius description for other entities besides starlikeness and convexity, consider in general two families $\mathcal{G}$ and $\mathcal{M}$ of $\mathcal{A}$. The \emph{$\mathcal{G}$-radius} for the class $\mathcal{M}$, denoted by $R_\mathcal{G}(\mathcal{M})$, is the largest number $R$ such that $r^{-1}f(rz)\in\mathcal{G}$ for every $0< r\leq R$ and $f \in \mathcal{M}$. Thus, for example, an equivalent description of the radius of starlikeness for $\mathcal{S}$ is that the $\mathcal{S}^*$-radius for the class $\mathcal{S}$ is $R_{\mathcal{S}^*}(\mathcal{S}) = \tanh(\pi/4)$.

In this paper, we seek to determine the radius of starlikeness, and certain other $\mathcal{G}$-radius, for particular subclasses $\mathcal{G}$ of $\mathcal{A}$. Several widely-studied subclasses of $\mathcal{A}$ have simple geometric descriptions; these functions are often expressed as a ratio between two functions. Among the very early studies in this direction is the class of close-to-convex functions introduced by Kaplan \cite{kaplan}, and Reade's class \cite{reade}  of close-to-starlike functions. Close-to-convex functions are necessarily univalent, but not so for close-to-starlike functions. Several works, for example those in \cite{ali2,mac1,mac4,mac3,mac2,vravicmft,jain}, have advanced studies in classes of functions characterized by the ratio between functions $f$ and $g$ belonging to given subclasses of $\mathcal{A}$.
%Indeed, in \cite{ali2}, radius constants are obtained for classes of functions $f$  satisfying either (i)  $\RE(f(z)/g(z))>0$, where $\RE (g(z)/z)>0$ or $\RE(g(z)/z)>1/2$, or (ii) $ |(f(z)/g(z))-1|<1$, where $\RE(g(z)/z)>0$ or $g$ is convex.

 In this paper, we examine two different subclasses of functions in $\mathcal{A}$ satisfying a certain subordination link of ratios. Interestingly, these classes contain non-univalent functions. An analytic function $f$ is \emph{subordinate} to an analytic function $g$, written $f\prec g$, if
\[f(z)=g(w(z)), \quad \quad z \in \mathbb{D}, \]
for some analytic self-map $w$ in $\mathbb{D}$ with $|w(z)| \leq |z|$. The function $w$ is often referred to as a Schwarz function.

Now let $h$ be a non-vanishing analytic function in $\mathbb{D}$ with $h(0)=1$. The classes treated in this paper consist of functions $f\in \mathcal{A}$ whose ratios $f(z)/g(z)$, $g(z)/z p(z)$, and $p(z)$, are each subordinate to $h$ for some analytic functions $g$ and $p$:
\[\frac{f(z)}{g(z)} \prec h(z), \quad \frac{g(z)}{z p(z)} \prec h(z), \quad p(z)\prec h(z).\]
When $p$ is the constant one function, then the class contains functions $f \in \mathcal{A}$ satisfying the subordination of ratios \[\frac{f(z)}{g(z)} \prec h(z), \quad \frac{g(z)}{z} \prec h(z).\] For $h(z)=(1+z)/(1-z),$ and other appropriate choices of $h$, these functions have earlier been studied, notably by MacGregor in \cite{mac1,mac4,mac3,mac2}, and Ratti in \cite{ratti68,ratti70}. Recent investigations include those in \cite{ali2,vravicmft,jain}.

In this paper, two specific choices of the function $h$ are made: $h(z) = \sqrt{1+z}$, and $h(z) = e^{z}$.

\subsection*{The Class $\mathcal{T}_{1}$} This is the class given by
\[ \mathcal{T}_{1}:=\left\{f\in \mathcal{A}: \frac{f(z)}{g(z)} \prec \sqrt{1+z}, \frac{g(z)}{z p(z)} \prec \sqrt{1+z} \quad \text{for some} \quad g \in \mathcal{A}, \, p(z) \prec \sqrt{1+z}  \right\}.\]
This class is non-empty: let $f_{1}, g_{1}, p_{1} : \mathbb{D} \to \mathbb{C}$ be given by
\[ f_{1}(z)= z(1+z)^{3/2}, \quad g_{1}(z)=z(1+z) \quad \text{and} \quad p_{1}(z)=\sqrt{1+z}.\]
Then $f_{1}(z)/g_{1}(z)\prec \sqrt{1+z}$ and $g_{1}(z)/zp_{1}(z)\prec \sqrt{1+z}$, so that $f_{1}\in \mathcal{T}_{1}.$ The function $f_{1}$ will be shown to play the role of an extremal function for the class $\mathcal{T}_{1}$. Since  $f_{1}'$ vanishes at $z=-2/5$,  the function $f_{1}$ is non-univalent, and thus, the class $\mathcal{T}_{1}$ contains non-univalent functions. Incidentally, $f_{1}$ demonstrates the radius of univalence for $\mathcal{T}_{1}$ is at most $2/5$. In Theorem~\ref{p8thm3.1}, the radius of starlikeness for $\mathcal{T}_{1}$ is shown to be $2/5$, whence $\mathcal{T}_{1}$ has radius of univalence $2/5$.

The following is a useful result in investigating the starlikeness of the class $\mathcal{T}_{1}$.

\begin{lemma} Let $p(z)\prec \sqrt{1+z}$. Then {p} satisfies the sharp inequalities
\begin{equation}\label{p8eqn2.2a}  \sqrt{1-r} \leq |p(z)|\leq \sqrt{1+r}, \quad |z|\leq r,
\end{equation}  and
\begin{equation}\label{p8eqn2.2}
\left|\frac{z p'(z)}{p(z)}\right| \leq \frac{r}{2(1-r)}, \quad |z|\leq r.
\end{equation}
\end{lemma}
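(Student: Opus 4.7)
The plan is to exploit the subordination directly by writing $p(z)=\sqrt{1+w(z)}$ for some Schwarz function $w$, and then reduce both inequalities to elementary estimates on $w$ together with the Schwarz--Pick lemma.

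Since $\sqrt{1+z}$ is univalent on $\mathbb{D}$ with $\sqrt{1+0}=1$, the subordination $p\prec \sqrt{1+z}$ is equivalent to the existence of an analytic self-map $w:\mathbb{D}\to\mathbb{D}$ with $w(0)=0$ and $p(z)=\sqrt{1+w(z)}$; in particular $|w(z)|\leq |z|$ by the Schwarz lemma. For the first inequality, I would simply observe that $|p(z)|^2=|1+w(z)|$, so the triangle inequality gives $1-|w(z)|\leq|p(z)|^2\leq 1+|w(z)|$, and taking square roots together with $|w(z)|\leq r$ yields \eqref{p8eqn2.2a}.

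For the derivative estimate I would differentiate the identity $p(z)^2=1+w(z)$ to obtain
\[
\frac{zp'(z)}{p(z)}=\frac{zw'(z)}{2(1+w(z))}.
\]
The Schwarz--Pick inequality gives $|w'(z)|\leq (1-|w(z)|^2)/(1-|z|^2)$, and the obvious bound $|1+w(z)|\geq 1-|w(z)|$ then produces
\[
\left|\frac{zp'(z)}{p(z)}\right|\leq \frac{|z|\,(1+|w(z)|)}{2(1-|z|^2)}.
\]
Since $|w(z)|\leq|z|\leq r$, the right-hand side is bounded by $r(1+r)/(2(1-r^2)) = r/(2(1-r))$, which is exactly \eqref{p8eqn2.2}; here I am implicitly using that $\rho\mapsto \rho/(2(1-\rho))$ is increasing on $[0,1)$ so the bound on $|z|\leq r$ is attained at the boundary.

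There is no serious obstacle; the only thing to be careful about is the sharpness assertion. I would verify sharpness by choosing $w(z)=z$, i.e. $p(z)=\sqrt{1+z}$: evaluation at $z=r$ saturates the upper bound in \eqref{p8eqn2.2a}, evaluation at $z=-r$ saturates the lower bound there, and evaluation at $z=-r$ in $zp'(z)/p(z)=z/(2(1+z))$ gives exactly $r/(2(1-r))$, showing that \eqref{p8eqn2.2} is also sharp.
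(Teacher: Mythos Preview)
Your proof is correct and follows essentially the same approach as the paper: write $p(z)^2=1+w(z)$ for a Schwarz function $w$, use the triangle inequality for the modulus bounds, and combine the Schwarz--Pick estimate $|w'(z)|\leq (1-|w(z)|^2)/(1-|z|^2)$ with $|1+w(z)|\geq 1-|w(z)|$ for the logarithmic-derivative bound, with sharpness witnessed by $p(z)=\sqrt{1+z}$. Your version is slightly more explicit about the sharpness verification (pointing out the evaluation at $z=-r$), but otherwise the arguments coincide.
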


\begin{proof}
If $p(z) \prec \sqrt{1+z}$, then $p^{2}(z)=1+w(z)$ for some Schwarz function $w$. The well-known Schwarz lemma shows that  $|w(z)|\leq |z|$ and  \begin{equation}\label{p8eqn2.1}
|w'(z)| \leq \frac{1-|w(z)|^{2}}{1-|z|^{2}}.
\end{equation}
Therefore,
\[|p(z)|^{2}=|1+w(z)|\leq 1+|w(z)|\leq 1+|z|\leq 1+r\]
for $|z|\leq r$, that is, $|p(z)|\leq \sqrt{1+r}$ for $|z|\leq r$. Similarly, $|p(z)|\geq \sqrt{1-r} $ for $|z|\leq r $.

Since  $2zp'(z)/p(z)=zw'(z)/(1+w(z))$, the inequality ~\eqref{p8eqn2.1} readily shows
\[
2\left|\frac{z p'(z)}{p(z)}\right| \leq \frac{|z||w'(z)|}{1-|w(z)|}\leq \frac{|z|(1+|w(z)|)}{1-|z|^{2}}\leq \frac{|z|(1+|z|)}{1-|z|^{2}}= \frac{|z|}{1-|z|}\leq \frac{r}{1-r}\]
for $|z|\leq r $. This proves \eqref{p8eqn2.2}. The  inequalities are sharp for the function $p(z)=\sqrt{1+z} \qedhere$.
\end{proof}

For $f \in \mathcal{T}_{1}$, let $p_{1}(z)=f(z)/g(z)$ and $p_{2}(z)=g(z)/z p(z)$. Then $f(z)=zp(z)p_{1}(z)p_{2}(z)$ and
 %so that $f(z)=zp(z)p_{1}(z)p_{2}(z)$ then there exist function $g \in \mathcal{A}$ and $p(z) \prec \sqrt{1+z}$ so that $f(z)/g(z) \prec \sqrt{1+z}$ and $g(z)/z p(z) \prec \sqrt{1+z}$. Define the functions $p_{1}, p_{2}: \mathbb{D} \to \mathbb{C}$ by $p_{1}(z)=f(z)/g(z)$, $p_{2}(z)=g(z)/z p(z)$ so that $f(z)=zp(z)p_{1}(z)p_{2}(z)$ and
\begin{equation}\label{p8eqn2.3}
\left|\frac{z f'(z)}{f(z)}-1\right| \leq \left|\frac{z p'(z)}{p(z)}\right| + \left|\frac{z p_{1}'(z)}{p_{1}(z)}\right| + \left|\frac{z p_{2}'(z)}{p_{2}(z)}\right|.
\end{equation}
Since $p, p_{1}, p_{2} \prec \sqrt{1+z}$, we deduce from ~\eqref{p8eqn2.2} and ~\eqref{p8eqn2.3} that
\begin{equation}\label{p8eqn2.4}
\left|\frac{z f'(z)}{f(z)}-1\right| \leq \frac{3 r}{2(1-r)}, \quad |z| \leq r,
\end{equation}
for each function $f \in \mathcal{T}_{1}$. Sharp growth inequalities also follow from \eqref{p8eqn2.2a}:
\[r(1-r)^{3/2} \leq |f(z)|\leq r(1+r)^{3/2}  \]
for each $f \in \mathcal{T}_{1}$. Crude distortion inequalities can readily be obtained from \eqref{p8eqn2.4} and the growth inequality; however, finding sharp estimates remain an open problem.

\subsection*{The Class $\mathcal{T}_{2}$} This class is defined by \[ \mathcal{T}_{2}:=\left\{f\in \mathcal{A}: \frac{f(z)}{g(z)} \prec e^{z}, \frac{g(z)}{z p(z)} \prec e^{z} \quad \text{for some} \quad g \in \mathcal{A},\  p(z) \prec e^{z}  \right\}.\]
Let $f_{2}, g_{2}, p_{2} : \mathbb{D} \to \mathbb{C}$ be given by
\[ f_{2}(z)= ze^{3z}, \quad g_{2}(z)=ze^{2z} \quad \text{and} \quad p_{2}(z)=e^{z}.\]
Evidently, $f_{2}(z)/g_{2}(z)\prec e^{z}$, $g_{2}(z)/zp_{2}(z)\prec e^{z}$ so that $f_{2}\in \mathcal{T}_{2}$, and the class $\mathcal{T}_{2}$ is non-empty.
 Similar to $f_{1} \in \mathcal{T}_{1}$, the function $f_{2}$ plays the role of an extremal function for the class $\mathcal{T}_{2}$. The Taylor series expansion for $f_{2}$  is
\[ f_{2}(z)=z+3 z^2+\frac{9 z^3}{2}+\frac{9 z^4}{2}+\frac{27 z^5}{8}+\dotsb.\]
Comparing the second  coefficient, it is clear that $f_{2}$ is non-univalent. Hence the class $\mathcal{T}_{2}$ contains non-univalent functions. The derivative $f_{2}'$ vanishes at $z=-1/3$, which shows the radius of univalence for $\mathcal{T}_{2}$ is at most $1/3$. From  Theorem ~\ref{p8thm3.1}, the radius of starlikeness  is shown to be $1/3$, and so the radius of univalence for $\mathcal{T}_{2}$ is $1/3$.

%We also consider the subclass of logharmonic mappings given by
%\[ \mathcal{TLH}_{2}:=\left\{f(z)=zh(z)\overline{g(z)}\in \mathcal{LH}: h(z) \prec e^{z}, \quad g(z) \prec e^{-z}\right\} .\]
%

\begin{lemma} Every $p(z)\prec e^z$ satisfies the sharp inequalities
\begin{equation}\label{p8eqn25a}  e^{-r} \leq |p(z)|\leq e^r, \quad |z|\leq r,
\end{equation}  and
\begin{equation}\label{p8eqn25}
\left|\frac{z p'(z)}{p(z)}\right|\leq  \begin{cases}  r, &  |z|\leq r \leq \sqrt{2}-1 \\ \dfrac{(1+r^{2})^{2}}{4(1-r^{2})}, & |z|=r \geq \sqrt{2}-1.\end{cases}
\end{equation}
\end{lemma}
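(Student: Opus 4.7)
The plan is to write $p$ in the form $p(z)=e^{w(z)}$ for a Schwarz function $w$ with $w(0)=0$ and $|w(z)|\le|z|$; this is legitimate because $e^z$ is univalent on $\D$, so the subordination $p\prec e^z$ gives exactly $w:=\log p$ as a Schwarz function. From there, the growth estimate \eqref{p8eqn25a} will drop out immediately from $|p(z)|=e^{\RE w(z)}$ together with $|\RE w(z)|\le|w(z)|\le r$, and both bounds are sharp at $z=\pm r$ when $w(z)=z$, i.e.\ $p(z)=e^z$.

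For the logarithmic-derivative estimate, I note that $zp'(z)/p(z)=zw'(z)$, so the task reduces to bounding $|zw'(z)|$ for a Schwarz function $w$. A direct application of the Schwarz--Pick inequality to $w$ gives only $|zw'(z)|\le r/(1-r^{2})$, which is too weak in both ranges. The key move is to pass to the auxiliary function $q(z):=w(z)/z$, which is analytic on $\D$ with $|q(z)|\le 1$ by Schwarz's lemma; Schwarz--Pick then applies to $q$ and yields $|q'(z)|\le(1-|q(z)|^{2})/(1-|z|^{2})$. Combining this with the product rule identity $w'(z)=q(z)+z q'(z)$ produces the pointwise inequality
\[
|zw'(z)|\le r\,|q(z)|+\frac{r^{2}\bigl(1-|q(z)|^{2}\bigr)}{1-r^{2}}.
\]

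The final step is a one-variable optimization. Writing $s:=|q(z)|\in[0,1]$, the right-hand side is the quadratic $\varphi(s)=rs+r^{2}(1-s^{2})/(1-r^{2})$, whose critical point is $s^{\ast}=(1-r^{2})/(2r)$. For $r\le\sqrt{2}-1$ one has $s^{\ast}\ge 1$, so $\varphi$ is increasing on $[0,1]$ and attains its maximum $\varphi(1)=r$ at the endpoint; for $r\ge\sqrt{2}-1$ one has $s^{\ast}\in[0,1]$, and a direct simplification gives $\varphi(s^{\ast})=(1+r^{2})^{2}/(4(1-r^{2}))$. The two expressions agree at the crossover $r=\sqrt{2}-1$, so the piecewise bound \eqref{p8eqn25} follows. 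The expected obstacle is not the calculus but rather the insight that the gain over the naive Schwarz--Pick estimate must come through the quotient $w(z)/z$; once that reduction is in place, everything else is mechanical. Sharpness in the first range is realised by $w(z)=z$, and in the second range it is approached by taking $q$ to be a Möbius self-map of $\D$ adjusted so that $|q(z)|=s^{\ast}$ at the prescribed boundary point.
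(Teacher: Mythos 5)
Your proof is correct, and it follows the same skeleton as the paper's: write $p=e^{w}$ for a Schwarz function $w$, get \eqref{p8eqn25a} from $|p(z)|=e^{\RE w(z)}$ and $|\RE w(z)|\le|w(z)|\le r$, and reduce \eqref{p8eqn25} to bounding $|zw'(z)|$. The only real divergence is at that last step: the paper simply cites the sharp derivative bound for Schwarz functions, $|w'(z)|\le\max\{1,(1+r^{2})^{2}/(4r(1-r^{2}))\}$, from Duren (Corollary, p.~199), whereas you re-derive it via the quotient $q(z)=w(z)/z$, Schwarz--Pick applied to $q$, the identity $w'=q+zq'$, and the optimization of $\varphi(s)=rs+r^{2}(1-s^{2})/(1-r^{2})$ over $s\in[0,1]$ with crossover at $s^{*}=(1-r^{2})/(2r)$, i.e.\ at $r=\sqrt2-1$. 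Your computation checks out (including $\varphi(s^{*})=(1+r^{2})^{2}/(4(1-r^{2}))$ and the matching of the two branches at $r=\sqrt2-1$); this is in fact the standard proof of the cited corollary, so what you gain is a self-contained argument at the cost of a page of calculus the paper outsources. One minor point: your sharpness claim in the range $r\ge\sqrt2-1$ is only sketched --- to get equality you need both equality in Schwarz--Pick for $q$ (so $q$ a disc automorphism with $|q(z)|=s^{*}$ at the chosen point) and alignment of the arguments of $zq(z)$ and $z^{2}q'(z)$; the paper handles this by deferring to the extremal function for the cited inequality, which is no more explicit.
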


\begin{proof}
 Let $p(z) \prec e^{z}$. Since $p(z)=e^{w(z)}$ for some Schwarz self-map $w$ satisfying $|w(z)|\leq |z|$, it follows that
 \[|p(z)|=e^{\RE w(z)} \leq e^{|w(z)|} \leq e^{|z|}.\] The function $w$ also satisfy the sharp inequality (see \cite[Corollary, p.\ 199]{duren})
\begin{equation}\label{p8eqn2.5a}
|w'(z)|\leq  \begin{cases}  1, & r=|z|\leq \sqrt{2}-1 \\ \dfrac{(1+r^{2})^{2}}{4r(1-r^{2})}, & r \geq \sqrt{2}-1.\end{cases}
\end{equation}
From $z p'(z)/p(z)=z w'(z)$, we conclude that
\begin{equation*}
\left|\frac{z p'(z)}{p(z)}\right|\leq  \begin{cases}  r, & r=|z|\leq \sqrt{2}-1 \\ \dfrac{ (1+r^{2})^{2}}{4(1-r^{2})}, & r \geq \sqrt{2}-1.\end{cases}
\end{equation*} This inequality is sharp for $p(z)=e^z$ and $r=|z|\leq \sqrt{2}-1$. It is also sharp in the remaining interval for the function $p(z)=e^{w(z)}$, where $w$ is the extremal function for which equality holds in \eqref{p8eqn2.5a}.
\end{proof}

For $f \in \mathcal{T}_{2}$, let $p_{1}(z)=f(z)/g(z)$ and $p_{2}(z)=g(z)/z p(z)$. Then $f(z)=zp(z)p_{1}(z)p_{2}(z)$ and
%If the function $f \in \mathcal{T}_{2}$, then there exist function $g \in \mathcal{A}$ and $p(z) \prec e^{z}$ so that $f(z)/g(z) \prec e^{z}$ and $g(z)/z p(z) \prec e^{z}$. Define the functions $p_{1}, p_{2}: \mathbb{D} \to \mathbb{C}$ by $p_{1}(z)=f(z)/g(z)$, $p_{2}(z)=g(z)/z p(z)$ so that $f(z)=zp(z)p_{1}(z)p_{2}(z)$ and
\begin{equation}\label{p8eqn2.6}
\left|\frac{z f'(z)}{f(z)}-1\right| \leq \left|\frac{z p'(z)}{p(z)}\right| + \left|\frac{z p_{1}'(z)}{p_{1}(z)}\right| + \left|\frac{z p_{2}'(z)}{p_{2}(z)}\right|.
\end{equation}
Since $p, p_{1}, p_{2} \prec e^{z}$, estimates ~\eqref{p8eqn25} and ~\eqref{p8eqn2.6} show that
\begin{equation}\label{p8eqn2.7}
\left|\frac{z f'(z)}{f(z)}-1\right| \leq \begin{cases}  3r, & r=|z|\leq \sqrt{2}-1 \\ \dfrac{3(1+r^{2})^{2}}{4(1-r^{2})}, & r \geq \sqrt{2}-1.\end{cases}
\end{equation}
for each function $f \in \mathcal{T}_{2}$. It also follows from \eqref{p8eqn25a} that
\[re^{-3r} \leq |f(z)|\leq re^{3r}  \] holds  for each function $f \in \mathcal{T}_{2}$, and that these estimates are sharp.

In this paper, we shall adopt the commonly used notations for subclasses of $\mathcal{A}$. First, for $0\leq \alpha<1$, let $\mathcal{S}^*(\alpha)$ denote the class of starlike functions of order $\alpha$ consisting of functions $f\in\mathcal{A}$ satisfying the subordination
\[\frac{zf'(z)}{f(z)} \prec \frac {1+(1-2\alpha)z}{1-z}.\]
Thus
\[\RE \frac{z f'(z)}{f(z)} > \alpha, \quad \quad z \in \mathbb{D}. \]
The case $\alpha=0$ corresponds to the classical functions whose image domains are starlike with respect to the origin. Various other starlike subclasses of  $\mathcal{A}$ occurring in the literature can be expressed in terms of the subordination
\begin{equation}\label{rma7}
\frac{zf'(z)}{f(z)} \prec \varphi (z)
\end{equation}
for suitable choices of the superordinate function $\varphi$. When $\varphi:\mathbb{D}\to \mathbb{C}$ is chosen to be  $\varphi (z):=(1+Az)/(1+Bz)$, $-1\leq B < A\leq 1$, the subclass derived is denoted by $\mathcal{S}^*[A,B]$. Functions $f \in \mathcal{S}^*[A,B]$ are known as Janowski starlike. When $\varphi (z):=1+(2/\pi^{2})( \left(\log ((1+\sqrt{z})/(1-\sqrt{z}))\right)^{2})$, the subclass is denoted by $\mathcal{S}^{*}_{p}$, and its functions are called parabolic starlike.

In Section 2 of this paper, the radius of starlikeness, Janowski starlikeness, and parabolic starlikeness are found for the classes $\mathcal{T}_{i}$, with $i=1,2$. Section 3 deals with the determination of the $\mathcal{G}$-radius for the class $\mathcal{T}_{i}$ with $i=1,2$, for certain other subclasses $\mathcal{G}$ occurring in the literature. These classes are associated with particular choices of the superordinate function $\varphi$ in \eqref{rma7}. As mentioned earlier, the $\mathcal{G}$-radius for a given class $\mathcal{M}$, denoted by $R_\mathcal{G}(\mathcal{M})$, is the largest number $R$ such that $r^{-1}f(rz)\in\mathcal{G}$ for every $0< r\leq R$ and $f \in \mathcal{M}$. It will become apparent in the forthcoming proofs that there are common features in the methodology of finding the $\mathcal{G}$-radius for each of these subclasses.

\section{Starlikeness of order $\alpha$, Janowski and parabolic starlikeness}\label{p8sec3}

 The first result deals with the $\mathcal{S}^*(\alpha)$-radius (radius of starlikeness of order $\alpha$) for the classes $\mathcal{T}_{1}$ and $\mathcal{T}_{2}$. This radius is shown to equal the $\mathcal{S}^*_\alpha$-radius, where $\mathcal{S}^*_\alpha$ is the subclass containing functions $f\in\mathcal{A}$ satisfying $|zf'(z)/f(z)-1|<1-\alpha$. The latter condition also implies that $\mathcal{S}^*_\alpha \subset \mathcal{S}^*(\alpha)$.

\begin{theorem}\label{p8thm3.1}
Let $ 0\leq \alpha <1$. The radius of starlikeness of order $\alpha$ for $\mathcal{T}_{1}$ and $\mathcal{T}_{2}$ are
\begin{enumerate}[(i)]
	\item $R_{\mathcal{S}^*(\alpha)}(\mathcal{T}_{1})= R_{\mathcal{S}^*_\alpha}(\mathcal{T}_{1})=2(1-\alpha)/(5-2\alpha)$,

	\item $R_{\mathcal{S}^*(\alpha)}(\mathcal{T}_{2})=R_{\mathcal{S}^*_\alpha}(\mathcal{T}_{2})=(1-\alpha)/3$.
\end{enumerate}
\end{theorem}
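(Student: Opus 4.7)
The plan is to prove both parts of the theorem by a sandwich argument: use the estimates~\eqref{p8eqn2.4} and~\eqref{p8eqn2.7} already established in the introduction to obtain the lower bounds on $R_{\mathcal{S}^*_\alpha}(\mathcal{T}_i)$, and then use the extremal functions $f_1$ and $f_2$ to produce matching upper bounds on $R_{\mathcal{S}^*(\alpha)}(\mathcal{T}_i)$. Throughout I will use the elementary inclusion $\mathcal{S}^*_\alpha\subset \mathcal{S}^*(\alpha)$, which follows from the fact that the disc $|w-1|<1-\alpha$ is contained in the half-plane $\RE w>\alpha$; this inclusion immediately yields
\[
R_{\mathcal{S}^*_\alpha}(\mathcal{T}_i)\leq R_{\mathcal{S}^*(\alpha)}(\mathcal{T}_i), \qquad i=1,2,
\]
so that it suffices to show the smaller quantity is $\geq$ the claimed radius and the larger is $\leq$ it.

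For part (i), the plan is to take any $f\in \mathcal{T}_1$ and use the estimate~\eqref{p8eqn2.4}, which gives $|zf'(z)/f(z)-1|\leq 3r/(2(1-r))$ for $|z|\leq r$. Then the inequality $3r/(2(1-r))\leq 1-\alpha$ is equivalent to $r\leq 2(1-\alpha)/(5-2\alpha)$, which shows that $r^{-1}f(rz)\in\mathcal{S}^*_\alpha$ for all such $r$, giving the lower bound $R_{\mathcal{S}^*_\alpha}(\mathcal{T}_1)\geq 2(1-\alpha)/(5-2\alpha)$. For the matching upper bound on $R_{\mathcal{S}^*(\alpha)}(\mathcal{T}_1)$, use the function $f_1(z)=z(1+z)^{3/2}\in \mathcal{T}_1$. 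A short computation gives
\[
\frac{z f_1'(z)}{f_1(z)}=\frac{2+5z}{2(1+z)},
\]
so that at $z=-r$ the real part is $(2-5r)/(2(1-r))$, which equals $\alpha$ precisely when $r=2(1-\alpha)/(5-2\alpha)$. Hence no larger $r$ can work, and the two radii coincide.

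Part (ii) proceeds along the same lines. For $f\in\mathcal{T}_2$ and $r\leq \sqrt{2}-1$, the estimate~\eqref{p8eqn2.7} gives $|zf'(z)/f(z)-1|\leq 3r$, and the bound $3r\leq 1-\alpha$ is equivalent to $r\leq (1-\alpha)/3$. Since $(1-\alpha)/3\leq 1/3<\sqrt{2}-1$ for every $\alpha\in[0,1)$, the first branch of~\eqref{p8eqn2.7} is the relevant one, and we obtain $R_{\mathcal{S}^*_\alpha}(\mathcal{T}_2)\geq (1-\alpha)/3$. For the matching upper bound, take $f_2(z)=ze^{3z}\in\mathcal{T}_2$; since
\[
\frac{z f_2'(z)}{f_2(z)}=1+3z,
\]
the real part at $z=-r$ is $1-3r$, which equals $\alpha$ exactly at $r=(1-\alpha)/3$, forcing $R_{\mathcal{S}^*(\alpha)}(\mathcal{T}_2)\leq (1-\alpha)/3$.

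In both cases, almost all the real work has already been done in the two lemmas and in the derivation of~\eqref{p8eqn2.4} and~\eqref{p8eqn2.7}. The only genuinely new ingredients are the two elementary identities $zf_1'(z)/f_1(z)=(2+5z)/(2(1+z))$ and $zf_2'(z)/f_2(z)=1+3z$, together with the observation that evaluation at $z=-r$ supplies the worst case. The one spot to be a little careful is verifying that in part (ii) the extremal radius $(1-\alpha)/3$ indeed falls in the regime $r\leq \sqrt{2}-1$ where the first branch of~\eqref{p8eqn25} applies; I do not expect any further obstacle.
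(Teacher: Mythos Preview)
Your proposal is correct and follows essentially the same approach as the paper: both use the estimates~\eqref{p8eqn2.4} and~\eqref{p8eqn2.7} for the lower bounds and the extremal functions $f_1(z)=z(1+z)^{3/2}$, $f_2(z)=ze^{3z}$ evaluated at $z=-R$ for sharpness. Your sandwich framing via $\mathcal{S}^*_\alpha\subset\mathcal{S}^*(\alpha)$ is slightly more streamlined than the paper's parallel treatment of the two radii, but the content is identical, including the check that $(1-\alpha)/3<\sqrt{2}-1$ so that the first branch of~\eqref{p8eqn2.7} applies.
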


\begin{proof}
$(i)$ The function $\sigma(r)=(2-5r)/(2-2r)$ is a decreasing function on $[0,1)$. Further, the number  $R_1:=2(1-\alpha)/(5-2\alpha)$ is  the root of the equation $\sigma(r)=\alpha$.  For $f \in \mathcal{T}_{1}$ and $0 < r \leq R_1$, the inequality \eqref{p8eqn2.4} readily yields
\[\RE \frac{z f'(z)}{f(z)} \geq 1-\frac{3 r}{2(1-r)} = \frac{2-5r}{2-2r} =\sigma(r)\geq  \sigma(R_1)=\alpha\] and
\[
\left|\frac{z f'(z)}{f(z)}-1\right| \leq \frac{3 r}{2(1-r)}=1-\sigma(r) \leq 1-\sigma(R_1) =1-\alpha. \]

At $z=-R_1$, the function $f_1\in \mathcal{T}_{1}$ given by  $f_{1}(z)=z(1+z)^{3/2}$ yields
\[  \frac{z f'_{1}(z)}{f_{1}(z)}= \frac{2+5z}{2+2z}=\frac{2-5R_1}{2-2R_1}=\alpha. \]
Thus
\[ \RE \frac{z f'_{1}(z)}{f_{1}(z)}= \alpha \quad \text{and}\quad \left|\frac{z f'_{1}(z)}{f_{1}(z)}-1\right|=1-\alpha. \]
This proves that the $ \mathcal{S}^*(\alpha) $ and $ \mathcal{S}^*_\alpha $ radii for $\mathcal{T}_1$ are the same number $R_1$.

$(ii)$  Consider $\omega(r)=1-3r$, \, $0\leq r <1$.  The number  $R_2=(1-\alpha)/3<1/3$ is clearly the root of the equation $\omega(r)=\alpha$. Since $\omega$ is decreasing, then $\omega(r) \geq \omega(R_2)=\alpha$ for each $f \in \mathcal{T}_{2}$ and $0 < r \leq R_2$. It follows from ~\eqref{p8eqn2.7} that
\[\RE \frac{z f'(z)}{f(z)} \geq 1-3r =\omega(r)\geq \alpha,\]
and
\[\left| \frac{z f'(z)}{f(z)}-1\right| \leq 3r =1-\omega(r)\leq 1- \alpha.\]

Evaluating the function $f_{2}(z)=ze^{3z}$ at $z=-R_2$ yields
\[   \frac{z f'_{2}(z)}{f_{2}(z)}= 1-3R=\alpha.  \]
Hence
\[ \RE \frac{z f'_{2}(z)}{f_{2}(z)}= \alpha  \quad \text{and}\quad \left|\frac{z f'_{2}(z)}{f_{2}(z)}-1\right|=1-\alpha. \]
This proves that the $ \mathcal{S}^*(\alpha) $ and $ \mathcal{S}^*_\alpha $ radii for the class $\mathcal{T}_2$ are the same number $R_2$.
\end{proof}

Next we find the $\mathcal{S}^*[A,B]$-radius (Janowski starlikeness) for $\mathcal{T}_{1}$ and $\mathcal{T}_{2}$. Recall that $\mathcal{S}^*[A,B]$ consists of analytic functions $f\in \mathcal{A}$ satisfying the subordination $zf'(z)/f(z)\prec (1+Az)/(1+Bz)$, $-1\leq B < A\leq 1$.
\begin{theorem}\label{p8thm3.1a}
\begin{enumerate}[(i)]
 \item Every $f \in \mathcal{T}_{1}$ is Janowski starlike in the disc $\mathbb{D}_r=\{z:|z|<r\}$ for $r \leq 2(A-B)/(3(1+|B|)+2(A-B))$. If $B < 0$, then $R_{\mathcal{S}^*[A,B]}(\mathcal{T}_{1})= 2(A-B)/(3+2A-5B))$.

 \item The radius of Janowski starlikeness for $\mathcal{T}_{2}$ is $R_{\mathcal{S}^*[A,B]}(\mathcal{T}_{2})=(A-B)/(3(1-B))$.

%     Every $f \in \mathcal{T}_{2}$ is Janowski starlike in the disc $\mathbb{D}_r=\{z:|z|<r\}$ for $r \leq (A-B)/(3(1+|B|))$. If $B < 0$, then $R_{\mathcal{S}^*[A,B]}(\mathcal{T}_{2})=(A-B)/(3(1-B))$.

%The radius of Janowski starlikeness for $\mathcal{T}_{1}$ and $\mathcal{T}_{2}$ are
%\begin{enumerate}[(i)]
%	\item $R_{\mathcal{S}^*[A,B]}(\mathcal{T}_{1})= 2(A-B)/(3(1+|B|)+2(A-B))$,
%	\item $R_{\mathcal{S}^*[A,B]}(\mathcal{T}_{2})=(A-B)/(3(1+|B|))$.
\end{enumerate}
\end{theorem}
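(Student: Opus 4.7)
The plan is to render the Janowski condition $zF'(z)/F(z)\prec(1+Az)/(1+Bz)$ in geometric terms and then plug in the bounds \eqref{p8eqn2.4} and \eqref{p8eqn2.7} already established for $\mathcal{T}_1$ and $\mathcal{T}_2$. The image $\varphi(\mathbb{D})$ of $\varphi(z)=(1+Az)/(1+Bz)$ is the open disc with centre $c_0=(1-AB)/(1-B^2)$ and radius $\rho_0=(A-B)/(1-B^2)$, so $F\in\mathcal{S}^*[A,B]$ precisely when $zF'(z)/F(z)$ keeps its values inside this disc on $\mathbb{D}$. The standard change of variable $F(z)=r^{-1}f(rz)$, $\zeta=rz$, then reduces the $\mathcal{S}^*[A,B]$-radius problem for $\mathcal{T}_i$ to showing that $\zeta f'(\zeta)/f(\zeta)$ lies in $\{|w-c_0|<\rho_0\}$ whenever $|\zeta|\leq r$.

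For (i), estimate \eqref{p8eqn2.4} confines $\zeta f'(\zeta)/f(\zeta)$ to the disc $|w-1|\leq 3r/(2(1-r))$ for every $f\in\mathcal{T}_1$. One disc $|w-1|\leq R_*$ is contained in the disc $|w-c_0|\leq\rho_0$ precisely when $|1-c_0|+R_*\leq\rho_0$; a direct computation yields $|1-c_0|=|B|(A-B)/(1-B^2)$, so $\rho_0-|1-c_0|=(A-B)/(1+|B|)$, and sufficiency reduces to the linear inequality
\[
\frac{3r}{2(1-r)}\leq\frac{A-B}{1+|B|},
\]
which simplifies to $r\leq 2(A-B)/(3(1+|B|)+2(A-B))$. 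For sharpness in the case $B<0$, substituting $|B|=-B$ recovers the stated $R_1=2(A-B)/(3+2A-5B)$; the extremal $f_1(z)=z(1+z)^{3/2}$ satisfies $zf_1'(z)/f_1(z)=(2+5z)/(2+2z)$, and a short algebraic simplification shows that evaluation at $z=-R_1$ hits exactly the leftmost boundary point $(1-A)/(1-B)$ of $\varphi(\mathbb{D})$. Since this quantity is monotone decreasing in $r$ along the negative axis, $f_1$ leaves $\varphi(\mathbb{D})$ immediately beyond $R_1$, confirming that no larger radius works.

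Part (ii) is handled by the same template applied to \eqref{p8eqn2.7}. I would first note that the candidate $R_2=(A-B)/(3(1-B))$ satisfies $R_2\leq 1/3<\sqrt{2}-1$, so the relevant regime of \eqref{p8eqn2.7} is the linear one $|\zeta f'(\zeta)/f(\zeta)-1|\leq 3r$. The disc-containment test then gives $3r\leq (A-B)/(1+|B|)$, which in the regime $B\leq 0$ reads $r\leq(A-B)/(3(1-B))$. Sharpness is supplied by $f_2(z)=ze^{3z}$, whose logarithmic derivative is $zf_2'(z)/f_2(z)=1+3z$; evaluation at $z=-R_2$ lands on $(1-A)/(1-B)$, the leftmost point of $\varphi(\mathbb{D})$, and the same monotonicity argument rules out any larger radius. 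The main bookkeeping obstacle is signs: one needs $|1-c_0|$ computed correctly so that $\rho_0-|1-c_0|=(A-B)/(1+|B|)$ in all relevant cases, and one needs to verify that the extremal $f_i$ actually saturates \eqref{p8eqn2.4} or \eqref{p8eqn2.7} in the direction pointing to the leftmost boundary point of $\varphi(\mathbb{D})$. Once these geometric ingredients are in place, the remaining work is a routine algebraic reduction.
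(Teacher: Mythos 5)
Your proposal matches the paper's proof essentially step for step: the same reduction via \eqref{p8eqn2.4} and \eqref{p8eqn2.7}, the same disc-containment criterion $|a-c|\le b-d$ (Silverman) yielding $\rho_0-|1-c_0|=(A-B)/(1+|B|)$, and the same extremal functions $f_1(z)=z(1+z)^{3/2}$ and $f_2(z)=ze^{3z}$ hitting the boundary point $(1-A)/(1-B)$ at $z=-R_i$. The only point you gloss over is $B=-1$, where $\varphi(\mathbb{D})$ is a half-plane rather than a disc and your formulas for $c_0$ and $\rho_0$ break down; the paper disposes of that case separately by noting $\mathcal{S}^*[A,-1]=\mathcal{S}^*((1-A)/2)$ and invoking Theorem~\ref{p8thm3.1}.
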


\begin{proof} Since $\mathcal{S}^*[A,-1]=\mathcal{S}^*((1-A)/2)$, the results in the case $B=-1$ follow from Theorem~\ref{p8thm3.1}. We now prove the results when  $-1<B<A\leq 1$.

$(i)$ Let $f\in\mathcal{T}_1$ and write $w=zf'(z)/f(z)$. Then ~\eqref{p8eqn2.4} shows that $|w-1|\leq 3r/(2(1-r))$ for $|z|\leq r$. For $0\leq r\leq R_1:= 2(A-B)/(3(1+|B|)+2(A-B))$, then  $3R_1/((2(1-R_1)) =(A-B)/(1+|B|)$.

For $0\leq r\leq R_1$, we first show that the disc
\[\left\{w : |w-1|\leq \dfrac{3R_1}{2(1-R_1)}=\dfrac{A-B}{1+|B|}\right\}\]
is contained in the images  of the unit disc under the mapping $(1+Az)/(1+Bz)$. As $B\neq -1$, the image is the disc given
by
\[\left\{w:\left|w- \dfrac{1-AB}{1-B^2}\right| < \dfrac{A-B}{1-B^2}\right\}.\]
Silverman \cite[p. 50-51]{silver} has shown that  the disc
\[\{w:|w-c|<d\}\subset \{w: |w-a|< b\}\]
if and only if $|a-c| \leq b-d$. With the choices $c=1$, $d=(A-B)/(1+|B|)$, $a=(1-AB)/(1-B^2)$ and  $b=(A-B)/(1-B^2)$, then $|a-c|=|B|(A-B)/(1-B^2)= b-d$. This proves that $\mathcal{S}^*[A,B]$ radius is at least $R_1$.

To prove sharpness,
consider the function $f_1\in\mathcal{T}_1$ given by $f_1(z)=z(1+z)^{3/2}$. Evidently, $zf_1'(z)/f_1(z)=(2+5z)/(2+2z)$.  For $B<0$, evaluating at $z=-R_1$, then $zf_1'(z)/f_1(z)=1+3z/(2+2z)=1-(A-B)/(1+|B|)=(1-A)/(1-B)$. This shows that
\[\left|\frac{zf_1'(z)}{f_1(z)} - \frac{1-AB}{1-B^2}\right| =\left|\frac{1-A}{1-B} - \frac{1-AB}{1-B^2}\right|= \frac{A-B}{1-B^2},  \]
proving sharpness in the case $B<0$.

%For $B\geq 0$, the above equality holds at $z=R_1$, proving sharpness in the case $B\geq 0$.

$(ii)$  Let $f\in\mathcal{T}_2$ and $w:=zf'(z)/f(z)$. It follows from ~\eqref{p8eqn2.7} that $|w-1|\leq 3r$ for $|z|\leq r$. For $0\leq r\leq R_2:= (A-B)/(3(1+|B|))$, we see that the disc $\{w : |w-1|\leq 3R_2=(A-B)/(1+|B|)\}$ is contained in the disc $\{w:|w-(1-AB)/(1-B^2)| < (A-B)/(1-B^2)\}$, as in the proof of (i). This proves that $\mathcal{S}^*[A,B]$ radius is at least $R_2$.   The result is sharp for the function $f_2\in\mathcal{T}_2$ given by  the function $f_2(z)=ze^{3z}$.
\end{proof}

%\begin{corollary}\label{rma2a}
%Let $f(z)=zh(z)\overline{g(z)}\in \mathcal{TLH}_{1}$. Then $f$ is Janowski starlike in the disc $\mathbb{D}_r=\{z:|z|<r\}$ for every $r \leq (A-B)/(1+|B|+A-B)$. If $B < 0$, then $R_{\mathcal{S}^*[A,B]}(\mathcal{TLH}_{1})= (A-B)/(1+A-2B)$.
%\end{corollary}
%
%\begin{proof}
%The associated analytic function $\varphi(z)=zh(z)G(z)$ with $G=1/g$ satisfies ~\eqref{rma5}. Then $R = (A-B)/(1+|B|+A-B)$ is a root of
%\[\dfrac{r}{1-r} = \dfrac{A-B}{1+|B|}.\]
%The proof now follows as in Theorem~\ref{p8thm3.1a}.
%\end{proof}

The function $\varphi_{PAR}:\mathbb{D}\to\mathbb{C}$   given by
\begin{equation*}
\varphi_{PAR}(z):=1 + \frac{2}{\pi^{2}} \left(\log \frac{1+\sqrt{z}}{1-\sqrt{z}}\right)^{2},\quad \IM\sqrt{z}\geq 0,
\end{equation*}
maps $\mathbb{D}$ into the parabolic region
\[\varphi_{PAR}(\mathbb{D})=\left\{w=u+iv:v^{2}<2u-1\right\}=\left\{w:\RE w>|w-1| \right\}.\]
The class $\mathcal{C}(\varphi_{PAR})=\{f\in\mathcal{A}: 1+zf''(z)/f'(z)\prec \varphi_{PAR}(z)\}$ is the class of uniformly convex functions introduced by Goodman \cite{goodman}. The corresponding class $\mathcal{S}^*_{p}:=\mathcal{S}^*(\varphi_{PAR})=\{f\in\mathcal{A}:  zf'(z)/f(z)\prec \varphi_{PAR}(z)\}$ introduced by R{\o}nning \cite{ron} is known as the class of  parabolic starlike functions. The class  $\mathcal{S}^*_{p}$ consists of functions $f\in\mathcal{A}$ satisfying
\[\RE\left(\frac{z f'(z)}{f(z)}\right)>\left|\frac{z f'(z)}{f(z)}-1\right|,\quad z\in\mathbb{D}. \]
Evidently, every parabolic starlike function is also starlike of order 1/2. The radius of parabolic starlikeness for the class $\mathcal{T}_{1}$ and $\mathcal{T}_{2}$ is given in the next result.

\begin{corollary}\label{p8thm3.2}
The radius of parabolic starlikeness for $\mathcal{T}_{1}$ and $\mathcal{T}_{2}$ is respectively equal to its radius of starlikeness of order $1/2$. Thus,
\begin{enumerate}[(i)]
		\item $R_{\mathcal{S}^*_{p}}(\mathcal{T}_{1})=1/4$,
		\item $R_{\mathcal{S}^*_{p}}(\mathcal{T}_{2})=1/6$.
	\end{enumerate}
\end{corollary}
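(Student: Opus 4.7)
The plan is to leverage Theorem~\ref{p8thm3.1} together with the elementary observation that the closed disc $\{w : |w-1| \leq 1/2\}$ sits inside the closed parabolic region $\{w : \RE w \geq |w-1|\}$. Indeed, if $|w-1| \leq 1/2$ then $\RE w \geq 1 - |w-1| \geq 1/2 \geq |w-1|$, with equality throughout exactly when $w = 1/2$. Consequently $\mathcal{S}^*_{1/2} \subseteq \mathcal{S}^*_p$, so the radius of parabolic starlikeness of $\mathcal{T}_i$ is at least the $\mathcal{S}^*_{1/2}$-radius, which by Theorem~\ref{p8thm3.1} with $\alpha = 1/2$ equals $2(1-1/2)/(5-1) = 1/4$ for $\mathcal{T}_1$ and $(1-1/2)/3 = 1/6$ for $\mathcal{T}_2$.

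For the reverse direction, I would show sharpness using the same extremal functions $f_1(z)=z(1+z)^{3/2}$ and $f_2(z)=ze^{3z}$ already used in Theorem~\ref{p8thm3.1}. For $f_1$ at $z_0=-1/4$, a direct substitution into $zf_1'(z)/f_1(z) = (2+5z)/(2+2z)$ gives the value $1/2$; in particular $\RE(z_0 f_1'(z_0)/f_1(z_0)) = 1/2 = |z_0 f_1'(z_0)/f_1(z_0) - 1|$, so we land on the boundary of the parabolic region and the subordination $zf_1'/f_1 \prec \varphi_{PAR}$ must fail on any disc of radius larger than $1/4$. Analogously, for $f_2$ at $z_0 = -1/6$ one finds $z_0 f_2'(z_0)/f_2(z_0) = 1 + 3z_0 = 1/2$, again a boundary point of $\varphi_{PAR}(\mathbb{D})$.

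The only step requiring minor care is the sharpness half, since one must confirm that the image value $w=1/2$ actually lies on $\partial \varphi_{PAR}(\mathbb{D})$ rather than in its interior. This is immediate from the explicit description $\varphi_{PAR}(\mathbb{D})=\{w:\RE w > |w-1|\}$: at $w=1/2$ both sides equal $1/2$, so the strict inequality fails. No substantial obstacle is anticipated; the corollary is really a direct packaging of Theorem~\ref{p8thm3.1} (case $\alpha = 1/2$) with the inclusion $\mathcal{S}^*_{1/2} \subseteq \mathcal{S}^*_p$ and a one-line verification at the extremal point.
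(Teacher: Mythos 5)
Your proposal is correct, and the lower-bound half coincides with the paper's: both rest on the inclusion $\mathcal{S}^*_{1/2}\subset\mathcal{S}^*_{p}$ obtained from the fact that the disc $|w-1|<1/2$ lies in the parabolic region $\RE w>|w-1|$ (you verify this by hand, whereas the paper cites Shanmugam--Ravichandran's disc-inclusion lemma with $a=1$). Where you diverge is the sharpness half. The paper never touches the extremal functions here: it invokes the reverse inclusion $\mathcal{S}^*_{p}\subset\mathcal{S}^*(1/2)$ (every parabolic starlike function is starlike of order $1/2$), so that $R_{\mathcal{S}^*_{1/2}}(\mathcal{T}_i)\leq R_{\mathcal{S}^*_{p}}(\mathcal{T}_i)\leq R_{\mathcal{S}^*(1/2)}(\mathcal{T}_i)$, and then the equality $R_{\mathcal{S}^*_{1/2}}(\mathcal{T}_i)=R_{\mathcal{S}^*(1/2)}(\mathcal{T}_i)$ already proved in Theorem~\ref{p8thm3.1} collapses the sandwich. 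You instead evaluate $zf_1'(z)/f_1(z)=(2+5z)/(2+2z)$ at $z_0=-1/4$ and $zf_2'(z)/f_2(z)=1+3z$ at $z_0=-1/6$, observe that the value $1/2$ lies on $\partial\varphi_{PAR}(\mathbb{D})$, and conclude the subordination fails on any larger disc. Both routes are sound; the paper's is the cleaner packaging (no computation beyond Theorem~\ref{p8thm3.1}, and it is the template reused for the exponential and sigmoid radii later in the paper), while yours is self-contained and makes the extremal configuration explicit, which is the pattern the paper itself follows for the cardioid, sine, lune, rational and nephroid radii in Section 3. One small point of care you already flagged: your disc inclusion is stated with closed discs and non-strict inequalities, but for membership in $\mathcal{S}^*_{1/2}$ the values lie in the open disc $|w-1|<1/2$, which forces the strict inequality $\RE w>|w-1|$ needed for the subordination, so the argument goes through.
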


\begin{proof} Shanmugam and Ravichandran \cite[p.\ 321]{vravicmft} proved that
\begin{equation*}\label{p8eqn3.2}
\{w:|w-a|<a-1/2\}\subseteq \{w:\RE w>|w-1|\}
\end{equation*}
for $1/2<a \leq3/2$. Choosing $a=1$, this implies that $\mathcal{S}^*_{1/2} \subset \mathcal{S}^*_{p}$. Every parabolic starlike function is also starlike of order 1/2, whence the inclusion $\mathcal{S}^*_{1/2} \subset \mathcal{S}^*_{p}\subset \mathcal{S}^*(1/2)$. Therefore, for any class $\mathcal{F}$, readily
$R_{\mathcal{S}^*_{1/2}}(\mathcal{F})\leq R_{\mathcal{S}^*_{p}}(\mathcal{F})\leq R_{\mathcal{S}^*(1/2)}(\mathcal{F})$.

When $\mathcal{F}=\mathcal{T}_i$,  $i=1,2$, Theorem~\ref{p8thm3.1} gives
$R_{\mathcal{S}^*(\alpha)}(\mathcal{T}_{i})=R_{\mathcal{S}^*_\alpha}(\mathcal{T}_{i})$.
This shows that
$R_{\mathcal{S}^*_{1/2}}(\mathcal{T}_{i}) =R_{\mathcal{S}^*_{p}}(\mathcal{T}_{i}) = R_{\mathcal{S}^*(1/2)}(\mathcal{T}_{i})$. Since $  R_{\mathcal{S}^*(1/2)}(\mathcal{T}_{1}) =1/4$ and $R_{\mathcal{S}^*(1/2)}(\mathcal{T}_{2})= 1/6$ from Theorem~\ref{p8thm3.1}, it follows that $R_{\mathcal{S}^*_{p}}(\mathcal{T}_{1})=1/4$ and $R_{\mathcal{S}^*_{p}}(\mathcal{T}_{2})=1/6$.
\end{proof}

\section{Further radius of starlikeness}

In this section, we find the $\mathcal{G}$-radius for the class $\mathcal{T}_{i}$ with $i=1,2$, for certain other widely studied subclasses $\mathcal{G}$. These are associated with particular choices of the superordinate function $\varphi$ in \eqref{rma7}.

Denote by $\mathcal{S}^*_{\exp}:= \mathcal{S}^*(e^{z})$ the class associated with $\varphi (z):=e^{z}$ in \eqref{rma7}. This class was introduced by Mendiratta \textit{et al.} \cite{men1}, and it consists of functions $f\in\A$ satisfying the condition $|\log(zf'(z)/f(z))|<1$.
The following result gives the radius of exponential starlikeness for the classes $\mathcal{T}_{1}$ and $\mathcal{T}_{2}$.

\begin{corollary}\label{p8thm3.4}
The $\mathcal{S}^*_{\exp}$-radius for the class $\mathcal{T}_{1}$ is
\[R_{\mathcal{S}^*_{\exp}}(\mathcal{T}_{1})= (2-2e)/(2-5e)\approx 0.296475,\]
while that of $\mathcal{T}_{2}$ is
\[R_{\mathcal{S}^*_{\exp}}(\mathcal{T}_{2})=(e-1)/3e.\]
%\begin{enumerate}[(i)]
%	\item $R_{\mathcal{S}^*_{\exp}}(\mathcal{T}_{1})= (2-2e)/(2-5e)\approx 0.296475$,
%	\item $R_{\mathcal{S}^*_{\exp}}(\mathcal{T}_{2})=(e-1)/3e.$
%\end{enumerate}
\end{corollary}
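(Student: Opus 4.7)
The plan is to imitate, essentially verbatim, the disc-inclusion strategy used for Theorem~\ref{p8thm3.1a} and Corollary~\ref{p8thm3.2}. The class $\mathcal{S}^*_{\exp}$ is the starlike class associated with the region $\Omega_e := \{w : |\log w|<1\}$. From the work of Mendiratta \emph{et al.}\ one has the sharp inclusion
\[
\{w : |w-1| < 1 - 1/e\} \subset \Omega_e,
\]
the boundary point that witnesses sharpness being $w = 1/e$ on the positive real axis (where $|\log w|=1$). So the method is: bound $|zf'(z)/f(z)-1|$ using the estimates already proved in \eqref{p8eqn2.4} and \eqref{p8eqn2.7}, force that bound to be at most $1-1/e$, and then exhibit extremal functions that attain equality at $w = 1/e$.

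\textbf{Case $\mathcal{T}_1$.} For $f\in\mathcal{T}_1$ and $|z|\leq r$, \eqref{p8eqn2.4} gives $|zf'(z)/f(z)-1|\leq 3r/(2(1-r))$. Requiring this bound to be $\leq 1-1/e$ yields
\[
3er \leq 2(e-1)(1-r), \qquad \text{i.e.,}\qquad r \leq \frac{2(e-1)}{5e-2} = \frac{2-2e}{2-5e},
\]
which places the whole disc $\{zf'(z)/f(z) : |z|\leq r\}$ inside $\Omega_e$ and so gives $R_{\mathcal{S}^*_{\exp}}(\mathcal{T}_1)\geq (2-2e)/(2-5e)$. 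For sharpness I would evaluate the extremal $f_1(z)=z(1+z)^{3/2}$ at $z=-R_1$. A direct substitution in $zf_1'(z)/f_1(z) = (2+5z)/(2+2z)$ collapses to $1/e$, which lies on $\partial\Omega_e$ since $|\log(1/e)|=1$.

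\textbf{Case $\mathcal{T}_2$.} For $f\in\mathcal{T}_2$ and $r\leq \sqrt{2}-1$, \eqref{p8eqn2.7} gives $|zf'(z)/f(z)-1|\leq 3r$. Imposing $3r\leq 1-1/e$ gives $r\leq (e-1)/(3e)$. A numerical check confirms $(e-1)/(3e)\approx 0.2107 < \sqrt{2}-1 \approx 0.4142$, so the simpler branch of the piecewise bound in \eqref{p8eqn2.7} applies throughout and no auxiliary case from the other branch is needed. Sharpness follows from $f_2(z)=ze^{3z}$ evaluated at $z=-R_2$: since $zf_2'(z)/f_2(z)=1+3z$, the substitution yields $1-(e-1)/e = 1/e$, again on $\partial\Omega_e$.

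\textbf{Main obstacle.} There is no real obstacle beyond bookkeeping: the one point that requires attention is verifying that the candidate $R_2 = (e-1)/(3e)$ falls in the range $r\leq\sqrt{2}-1$ so that the linear (rather than rational) branch of \eqref{p8eqn2.7} is in force; otherwise one would have to solve a quadratic arising from the second branch. The rest is an invocation of the disc-containment lemma for $\Omega_e$ and the identical extremal calculation that was already used in Theorem~\ref{p8thm3.1} and Theorem~\ref{p8thm3.1a}.
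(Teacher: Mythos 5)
Your argument is correct, and both halves check out numerically: at $z=-R_1=-2(e-1)/(5e-2)$ one gets $(2-5R_1)/(2-2R_1)=6/(6e)=1/e$, and at $z=-R_2=-(e-1)/(3e)$ one gets $1-3R_2=1/e$, which indeed lies on $\partial\Omega_e$ since $|\log(1/e)|=1$; your check that $R_2<\sqrt{2}-1$ correctly isolates the linear branch of \eqref{p8eqn2.7}. Where you diverge from the paper is in the sharpness step. The paper does not evaluate the extremal functions at all here: it sandwiches $\mathcal{S}^*_{1/e}\subset\mathcal{S}^*_{\exp}\subset\mathcal{S}^*(1/e)$, using Mendiratta \emph{et al.}'s disc inclusion for the first containment and their Theorem 2.1(i) for the second, and then invokes Theorem~\ref{p8thm3.1}, which asserts $R_{\mathcal{S}^*_{1/e}}(\mathcal{T}_i)=R_{\mathcal{S}^*(1/e)}(\mathcal{T}_i)$, to squeeze the radius. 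You instead use only the first containment for the lower bound and obtain the upper bound by the direct boundary-contact computation with $f_1$ and $f_2$ — which is precisely the technique the paper deploys for its other corollaries (the cardioid, sine, lune, rational and nephroid classes). The trade-off is minor: the paper's route avoids any computation but needs the extra inclusion $\mathcal{S}^*_{\exp}\subset\mathcal{S}^*(1/e)$ from the literature, while yours is more self-contained at the cost of one (trivial) substitution, and it yields the slightly stronger explicit statement that the extremal value $zf_i'(z)/f_i(z)$ actually exits $\Omega_e$ at $-R$.
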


\begin{proof} Mendiratta \textit{et al.} \cite[Lemma 2.2]{men1} proved that \[\{w:|w-a|<a-1/e\}\subseteq \{w:|\log w|<1\} \] for $e^{-1} \leq a \leq (e+e^{-1})/2$, and this inclusion with $a=1$ gives $\mathcal{S}^* _{1/e}\subset \mathcal{S}^*_{\exp}$. It was also shown in \cite[Theorem 2.1 (i)]{men1}  that   $\mathcal{S}^*_{\exp}\subset \mathcal{S}^* (1/e)$. Therefore, $\mathcal{S}^* _{1/e}\subset \mathcal{S}^*_{\exp}\subset \mathcal{S}^* (1/e),$  which, as a consequence of  Theorem~\ref{p8thm3.1}, established the result.
\end{proof}

%\begin{proof}
%	$(i)$ Define the function $\sigma$ by $\sigma(r)=(2-5r)/(2-2r)$, $0\leq r <1$. Let $R=R_{\mathcal{S}^*_\exp}(\mathcal{T}_{1})$ be the root of the equation $\sigma(r)=1/e$. Using the fact that $\sigma$ is a decreasing function along with ~\eqref{p8eqn2.4}, for $f \in \mathcal{T}_{1}$ and $0 < r \leq R_{\mathcal{S}^*_\exp}(\mathcal{T}_{1})$, we get $\sigma(r) \geq \sigma(R)=1/e$ and
%	\[\left|\frac{z f'(z)}{f(z)}-1 \right| \leq \frac{3r}{2(1-r)} \leq 1-\frac{1}{e}.\]
%	The result now follows from ~\eqref{p8eqn3.4}. For the function $f_{1}(z)=z(1+z)^{3/2}$ at $z=-R=-R_{\mathcal{S}^*_\exp}(\mathcal{T}_{1})$, we have
%	\[ \left|\log \frac{z f'_{1}(z)}{f_{1}(z)}\right|=\left|\log \frac{2-5R}{2-2R}\right|= |\log e^{-1}|=1.\]
%	Thus, the radius is sharp.
%	
%		$(ii)$  Define the function $\omega$ by $\omega(r)=1-3r$, $0\leq r <1$. Let $R=R_{\mathcal{S}^*_\exp}(\mathcal{T}_{2})$ be the root of the equation $\omega(r)=1/e$. Using the fact that $\omega$ is a decreasing function along with ~\eqref{p8eqn2.7}, for $f \in \mathcal{T}_{2}$ and $0 < r \leq R_{\mathcal{S}^*_\exp}(\mathcal{T}_{2})<\sqrt{2}-1$, we get $\omega(r) \geq \omega(R)=1/e$ and
%	\[\left|\frac{z f'(z)}{f(z)}-1\right| \leq 3r \leq 1-\frac{1}{e}.\]
%	The result now follows from ~\eqref{p8eqn3.4}. For the function $f_{2}(z)=ze^{3z}$ at $z=-R=-R_{\mathcal{S}^*_\exp}(\mathcal{T}_{2})$, we have
%	\[ \left|\log \frac{z f'_{2}(z)}{f_{2}(z)}\right|=|\log(1-3R)|=1. \]
%	This proves that the radius is sharp.
%\end{proof}

Corollary ~\ref{p8thm3.3} investigates the radius of cardioid starlikeness for each class $\mathcal{T}_{1}$ and $\mathcal{T}_{2}$. The class $S^*_C:=\mathcal{S}^*(\varphi_{CAR})$, where $\varphi_{CAR}(z)=1+4z/3+2z^{2}/3$ in \eqref{rma7}, was introduced and studied in \cite{jain,sharma,sharma2, sharma3}. Descriptively, $f\in S^*_C$ provided $zf'(z)/f(z)$ lies in the region bounded by the cardioid $\Omega_C :=\{w=u+iv: (9 u^2+9 v^2-18u+5)^2- 16 (9 u^2+9 v^2- 6u+1)=0\}$.

\begin{corollary}\label{p8thm3.3}
	The following are the $S^*_C$-radius for the classes $\mathcal{T}_{1}$ and $\mathcal{T}_{2}$:
	\begin{enumerate}[(i)]
		\item $R_{S^*_C}(\mathcal{T}_{1})= 4/13$,
		\item $R_{S^*_C}(\mathcal{T}_{2})=2/9$.
	\end{enumerate}
\end{corollary}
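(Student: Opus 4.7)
The strategy is to mimic the squeeze argument used in Corollaries~\ref{p8thm3.2} and \ref{p8thm3.4}: locate a starlikeness order $\alpha$ for which
\[\mathcal{S}^*_{\alpha}\subset S^*_C\subset \mathcal{S}^*(\alpha),\]
and then invoke Theorem~\ref{p8thm3.1} to conclude that the three $\mathcal{G}$-radii for $\mathcal{T}_i$ coincide. The natural candidate here is $\alpha=1/3$, because $\varphi_{CAR}(-1)=1-4/3+2/3=1/3$, so the cardioid $\Omega_C=\varphi_{CAR}(\mathbb{D})$ meets the real axis at its minimum $\RE w=1/3$. This immediately gives the right-hand inclusion $S^*_C\subset \mathcal{S}^*(1/3)$.

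For the left-hand inclusion, I would appeal to the disc-in-cardioid lemma already established in the cardioid-starlike literature (for instance \cite{sharma}; the statement asserts that $\{w:|w-a|<a-1/3\}\subseteq\Omega_C$ for a suitable range of $a$ including $a=1$). Specialising $a=1$ gives $\{w:|w-1|<2/3\}\subseteq\Omega_C$, and hence $\mathcal{S}^*_{1/3}\subset S^*_C$. Combined with the previous paragraph, this yields the chain of inclusions
\[\mathcal{S}^*_{1/3}\subset S^*_C\subset \mathcal{S}^*(1/3),\]
which forces, for every class $\mathcal{F}$,
\[R_{\mathcal{S}^*_{1/3}}(\mathcal{F})\leq R_{S^*_C}(\mathcal{F})\leq R_{\mathcal{S}^*(1/3)}(\mathcal{F}).\]

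Now take $\mathcal{F}=\mathcal{T}_i$, $i=1,2$. Theorem~\ref{p8thm3.1} says that the outer two quantities coincide, so $R_{S^*_C}(\mathcal{T}_i)=R_{\mathcal{S}^*(1/3)}(\mathcal{T}_i)$. Plugging $\alpha=1/3$ into the formulas of Theorem~\ref{p8thm3.1} gives
\[R_{S^*_C}(\mathcal{T}_{1})=\frac{2(1-1/3)}{5-2/3}=\frac{4}{13},\qquad R_{S^*_C}(\mathcal{T}_{2})=\frac{1-1/3}{3}=\frac{2}{9}.\]
Sharpness is inherited from Theorem~\ref{p8thm3.1}: at $z=-4/13$ the function $f_1(z)=z(1+z)^{3/2}$ gives $zf_1'(z)/f_1(z)=1/3$, which lies on $\partial\Omega_C$ (the cusp on the real axis), and similarly $f_2(z)=ze^{3z}$ reaches $zf_2'(z)/f_2(z)=1/3$ at $z=-2/9$.

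The only non-routine ingredient is verifying the disc inclusion $\{w:|w-1|<2/3\}\subseteq\Omega_C$; if the exact form in the cited reference is stated in a slightly different parametrisation, I would recompute the largest disc centred at $w=1$ inscribed in the cardioid $\Omega_C=\{(9u^2+9v^2-18u+5)^2=16(9u^2+9v^2-6u+1)\}$ directly, by substituting $u=1+\rho\cos\theta$, $v=\rho\sin\theta$ and minimising the resulting radius over $\theta\in[0,2\pi)$; the minimum occurs at $\theta=\pi$ (the cusp) and equals $2/3$. This is the one place where a small, but essentially self-contained, geometric computation is needed.
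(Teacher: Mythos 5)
Your lower bound is exactly the paper's: the disc-in-cardioid lemma of Sharma \emph{et al.} with $a=1$ gives $\mathcal{S}^*_{1/3}\subset S^*_C$ and hence $R_{\mathcal{S}^*_{1/3}}(\mathcal{T}_i)\leq R_{S^*_C}(\mathcal{T}_i)$. The upper bound, however, contains a genuine error. You claim $S^*_C\subset\mathcal{S}^*(1/3)$ on the grounds that the cardioid meets the real axis at $\RE w=1/3$; but the infimum of $\RE w$ over $\Omega_C$ is $0$, not $1/3$. Indeed, on $|z|=1$ one has
\[
\RE\varphi_{CAR}(e^{i\theta})=\tfrac13+\tfrac43\cos\theta\,(1+\cos\theta),
\]
which attains the value $0$ at $\cos\theta=-1/2$; the boundary of the cardioid passes through $\pm i/\sqrt{3}$, and interior points such as $0.1+0.577i$ (which satisfies the defining inequality for $\Omega_C$) have real part well below $1/3$. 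The point $1/3$ is only the cusp, i.e.\ the leftmost point of $\Omega_C\cap\mathbb{R}$; the region bulges to the left of the vertical line $u=1/3$ away from the real axis. Consequently $S^*_C\not\subset\mathcal{S}^*(\alpha)$ for any $\alpha>0$, the sandwich $\mathcal{S}^*_{1/3}\subset S^*_C\subset\mathcal{S}^*(1/3)$ fails, and the inequality $R_{S^*_C}(\mathcal{T}_i)\leq R_{\mathcal{S}^*(1/3)}(\mathcal{T}_i)$ does not follow by this route.

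The paper avoids this by proving the upper bound directly through the extremal functions, and your closing "sharpness" remark already contains the needed ingredient: at $z=-4/13$ one has $zf_1'(z)/f_1(z)=(2-5R)/(2-2R)=1/3=\varphi_{CAR}(-1)\in\partial\Omega_C$, and since $\Omega_C\cap\mathbb{R}=(1/3,3)$, any real value $\leq 1/3$ lies outside $\Omega_C$; hence the subordination $zf_1'/f_1\prec\varphi_{CAR}$ fails on any disc of radius exceeding $4/13$, giving $R_{S^*_C}(\mathcal{T}_1)\leq 4/13$ (and similarly $f_2$ at $z=-2/9$ gives $R_{S^*_C}(\mathcal{T}_2)\leq 2/9$). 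If you replace the false inclusion by this direct boundary argument — which is what the paper does — the proof is complete; as written, the argument for the upper bound is unsound even though the stated radii are correct.
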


\begin{proof}  Sharma \textit{et al.}\cite{jain} proved that  $\{w:|w-a|<a-1/3\}\subseteq \Omega_C$ for $1/3<a\leq 5/3$, and this inclusion with $a=1$ gives $\mathcal{S}^* _{1/3}\subset \mathcal{S}^*_{C}$. Thus $R_{\mathcal{S}^* _{1/3}}(\mathcal{T}_i)\leq R_{\mathcal{S}^*_{C}}(\mathcal{T}_i) $ for $i=1,2$. To complete the proof, we demonstrate $ R_{\mathcal{S}^*_{C}}(\mathcal{T}_i)\leq R_{\mathcal{S}^* _{1/3}}(\mathcal{T}_i) $ for $i=1,2$.

$(i)$ Evaluating the function $f_{1}(z)=z(1+z)^{3/2}$ at $z=-R=-R_{S^*_{1/3}}(\mathcal{T}_{1})=-4/13$ gives
	\[  \frac{z f'_{1}(z)}{f_{1}(z)}=\frac{2+5z}{2+2z}=\frac{2-5R}{2-2R}= \frac{1}{3}=\varphi_{CAR}(-1).\]
	Thus, $R_{S^*_C}(\mathcal{T}_{1})\leq 4/13$.
	
	$(ii)$ Similarly, at $z=-R=-R_{S^*_{1/3}}(\mathcal{T}_{2})=-2/9$, the function $f_{2}(z)=ze^{3z}$ yields
\[  \frac{z f'_{2}(z)}{f_{2}(z)}=1+3z=1-3R =\frac{1}{3}=\varphi_{CAR}(-1). \]
This proves that  $R_{S^*_C}(\mathcal{T}_{2})\leq 2/9.$
\end{proof}	

In 2019, Cho \textit{et al.}\cite{viren} studied the class $\mathcal{S}^*_{\text{sin}}:=\mathcal{S}^*(1+\sin z)$ consisting of functions $f\in\A$ satisfying the condition $zf'(z)/f(z) \prec 1+ \sin z$. We find the $\mathcal{S}^*_{\text{sin}}$-radius for the classes $\mathcal{T}_{1}$ and $\mathcal{T}_{2}$.

\begin{corollary}\label{p8thm3.5}
	The following are the  $ \mathcal{S}_{\text{sin}}^{*}$-radius for each class $\mathcal{T}_{1}$ and $\mathcal{T}_{2}$:
	\begin{enumerate}[(i)]
		\item $R_{\mathcal{S}_{\text{sin}}^{*}}(\mathcal{T}_{1})= 2 (\sin 1)/(3+2 \sin 1)\approx 0.35938$.
		\item $R_{\mathcal{S}_{\text{sin}}^{*}}(\mathcal{T}_{2})=(\sin 1)/3.$
	\end{enumerate}
\end{corollary}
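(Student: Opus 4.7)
The strategy mirrors the proofs of Corollary~\ref{p8thm3.4} and Corollary~\ref{p8thm3.3}: sandwich the class $\mathcal{S}^{*}_{\sin}$ between two disc-type starlike subclasses whose radii for $\mathcal{T}_1$ and $\mathcal{T}_2$ have already been computed in Theorem~\ref{p8thm3.1}, then identify the value of $\alpha$ that makes the two sandwiching radii coincide.

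First I would invoke the disc-inclusion lemma of Cho \emph{et al.}~\cite{viren}, which asserts that
\[
\{w\in\mathbb{C}:|w-1|<\sin 1\}\subseteq\{1+\sin z:z\in\mathbb{D}\}.
\]
Taking $a=1$ gives the inclusion $\mathcal{S}^{*}_{1-\sin 1}\subseteq\mathcal{S}^{*}_{\sin}$, because any $f$ with $|zf'(z)/f(z)-1|<\sin 1$ then satisfies $zf'(z)/f(z)\prec 1+\sin z$. Conversely, from $\RE\sin z\geq -\sin 1$ on $\overline{\mathbb{D}}$ (the minimum is attained at $z=-1$ on the real axis, while the imaginary axis contributes $\RE\sin(iy)=0$), one obtains $\RE(1+\sin z)>1-\sin 1$ for $z\in\mathbb{D}$, yielding $\mathcal{S}^{*}_{\sin}\subseteq\mathcal{S}^{*}(1-\sin 1)$. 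Hence
\[
\mathcal{S}^{*}_{1-\sin 1}\subseteq\mathcal{S}^{*}_{\sin}\subseteq\mathcal{S}^{*}(1-\sin 1),
\]
so that for any class $\mathcal{F}$,
\[
R_{\mathcal{S}^{*}_{1-\sin 1}}(\mathcal{F})\leq R_{\mathcal{S}^{*}_{\sin}}(\mathcal{F})\leq R_{\mathcal{S}^{*}(1-\sin 1)}(\mathcal{F}).
\]

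Taking $\mathcal{F}=\mathcal{T}_i$ with $i=1,2$, Theorem~\ref{p8thm3.1} tells us that the two extreme quantities coincide, forcing all three to be equal. Plugging $\alpha=1-\sin 1$ into the formulas of Theorem~\ref{p8thm3.1} gives
\[
R_{\mathcal{S}^{*}_{\sin}}(\mathcal{T}_{1})=\frac{2(1-\alpha)}{5-2\alpha}=\frac{2\sin 1}{3+2\sin 1},\qquad R_{\mathcal{S}^{*}_{\sin}}(\mathcal{T}_{2})=\frac{1-\alpha}{3}=\frac{\sin 1}{3},
\]
establishing both assertions. As a sanity check for sharpness, evaluating $f_1(z)=z(1+z)^{3/2}$ at $z=-2\sin 1/(3+2\sin 1)$ gives $zf_1'(z)/f_1(z)=1-\sin 1=(1+\sin)(-1)\in\partial\Omega_{\sin}$, and evaluating $f_2(z)=ze^{3z}$ at $z=-(\sin 1)/3$ gives $zf_2'(z)/f_2(z)=1-\sin 1$ as well, confirming extremality.

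The only genuine obstacle is ensuring that the Cho \emph{et al.} disc-inclusion lemma is available in the precise form needed (namely the largest disc centered at $1$ sitting inside $(1+\sin)(\mathbb{D})$ has radius $\sin 1$). Once this is in hand, the rest is a direct application of the already-proved Theorem~\ref{p8thm3.1}, so no new estimation on $zf'/f$ over $\mathcal{T}_1$ or $\mathcal{T}_2$ is required.
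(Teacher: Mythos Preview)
Your sandwich argument has a genuine gap: the claim that $\RE\sin z\geq -\sin 1$ on $\overline{\mathbb{D}}$ is false. The minimum of $\RE\sin z=\sin x\cosh y$ over the unit disc is \emph{not} attained on the real axis. For instance, at $z=-0.9+i\sqrt{0.19}$ one has $\RE\sin z=\sin(-0.9)\cosh(\sqrt{0.19})\approx -0.859< -\sin 1\approx -0.841$, and nearby points interior to $\mathbb{D}$ give the same inequality. Consequently the inclusion $\mathcal{S}^{*}_{\sin}\subseteq\mathcal{S}^{*}(1-\sin 1)$ does not hold, and the right half of your sandwich collapses. This is exactly why the paper does \emph{not} treat $\mathcal{S}^{*}_{\sin}$ the way it treats the exponential and sigmoid cases (Corollaries~\ref{p8thm3.4} and~\ref{p8thm3.9}), where such an order-$\alpha$ inclusion is available from the literature.

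The repair is already in your write-up: what you labelled a ``sanity check'' is in fact the needed upper bound. The paper proceeds exactly this way. From the Cho \emph{et al.} disc inclusion you correctly get $\mathcal{S}^{*}_{1-\sin 1}\subset\mathcal{S}^{*}_{\sin}$, hence $R_{\mathcal{S}^{*}_{1-\sin 1}}(\mathcal{T}_i)\leq R_{\mathcal{S}^{*}_{\sin}}(\mathcal{T}_i)$. For the reverse inequality one evaluates the extremal functions directly: at $z=-R_{\mathcal{S}^{*}_{1-\sin 1}}(\mathcal{T}_1)$ the function $f_1(z)=z(1+z)^{3/2}$ gives $zf_1'(z)/f_1(z)=1-\sin 1=q(-1)\in\partial q(\mathbb{D})$, and similarly $f_2(z)=ze^{3z}$ at $z=\pm(\sin 1)/3$ gives $1\pm\sin 1=q(\pm 1)$. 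This shows $R_{\mathcal{S}^{*}_{\sin}}(\mathcal{T}_i)\leq R_{\mathcal{S}^{*}_{1-\sin 1}}(\mathcal{T}_i)$ and closes the argument without ever invoking $\mathcal{S}^{*}(1-\sin 1)$. Promote your sanity check to the main line and drop the faulty inclusion.
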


\begin{proof}
	It was proved in \cite{viren} that $\{w:|w-a|< \sin 1- |a-1|\}\subseteq q(\mathbb{D})$ for $|a-1| \leq \sin 1$, where $q(z):=1+ \sin z$. For $a=1$, this implies that $\mathcal{S}^*_{1-\sin 1} \subset \mathcal{S}_{\text{sin}}^{*}$.
	 Thus $R_{\mathcal{S}^* _{1-\sin 1}}(\mathcal{T}_i)\leq R_{\mathcal{S}^*_{\text{sin}}}(\mathcal{T}_i) $ for $i=1,2$. The proof is completed by demonstrating $ R_{\mathcal{S}^*_{\text{sin}}}(\mathcal{T}_i)\leq R_{\mathcal{S}^* _{1-\sin 1}}(\mathcal{T}_i) $ for $i=1,2$.
	
	$(i)$ Evaluating the function $f_{1}(z)=z(1+z)^{3/2}$ at $z=-R=-R_{S^*_{1-\sin 1}}(\mathcal{T}_{1})=-2 \sin 1/(3+2 \sin 1)$ gives
	\[  \frac{z f'_{1}(z)}{f_{1}(z)}=\frac{2+5z}{2+2z}=\frac{2-5R}{2-2R}= 1-\sin 1=q(-1).\]
	Thus, $R_{\mathcal{S}_{\text{sin}}^{*}}(\mathcal{T}_{1})\leq 2 \sin 1/(3+2 \sin 1)$.
	
	$(ii)$ Similarly, at $z=\pm R=\pm R_{S^*_{1-\sin 1}}(\mathcal{T}_{2})=\pm(\sin 1)/3$, the function $f_{2}(z)=ze^{3z}$ yields
	\[  \frac{z f'_{2}(z)}{f_{2}(z)}=1+3z=1\pm 3R =1\pm \sin 1=q(\pm 1). \]
	This proves that  $R_{\mathcal{S}_{\text{sin}}^{*}}(\mathcal{T}_{2})\leq (\sin 1)/3.$
\end{proof}

Consider next the class $ \mathcal{S}_{\leftmoon}^{*}:=\mathcal{S}^*(z+\sqrt{1+z^{2}})$ introduced by Raina and Sok\'{o}\l{} in \cite{raina}. Functions $f\in \mathcal{S}_{\leftmoon}^{*}$ provided $zf'(z)/f(z)$ lies in the region bounded by the lune $\Omega_{l} :=\{w: |w^{2}-1|<2|w|\}$. The result below gives the radius of lune starlikeness for each class $\mathcal{T}_{1}$ and $\mathcal{T}_{2}$.

\begin{corollary}\label{p8thm3.6}
	The following are the $ \mathcal{S}_{\leftmoon}^{*}$-radius for each class $\mathcal{T}_{1}$ and $\mathcal{T}_{2}$:
	\begin{enumerate}[(i)]
		\item $R_{\mathcal{S}_{\leftmoon}^{*}}(\mathcal{T}_{1})= 2 (\sqrt{2}-2)/(2 \sqrt{2}-7)\approx 0.280847$.
		\item $R_{\mathcal{S}_{\leftmoon}^{*}}(\mathcal{T}_{2})=(2-\sqrt{2})/3.$
	\end{enumerate}
\end{corollary}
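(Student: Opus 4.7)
The plan is to follow the template set by Corollaries~\ref{p8thm3.3}--\ref{p8thm3.5}: first locate the largest $\alpha\in[0,1)$ for which $\mathcal{S}^*_{\alpha}\subset\mathcal{S}^*_{\leftmoon}$, then invoke Theorem~\ref{p8thm3.1} to bound $R_{\mathcal{S}^*_{\leftmoon}}(\mathcal{T}_i)$ from below, and finally establish sharpness by evaluating the extremal functions $f_1,f_2$ of Theorem~\ref{p8thm3.1} at $z=-R_i$.

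For the first step, I would observe that the boundary of $\Omega_l=\{w:|w^2-1|<2|w|\}$ meets the positive real axis at the two points $w=\sqrt{2}\pm 1$, so the largest open disc centred at $w=1$ lying inside $\Omega_l$ has radius $\min(2-\sqrt{2},\sqrt{2})=2-\sqrt{2}$. The resulting disc inclusion
\[\{w:|w-1|<2-\sqrt{2}\}\subset \Omega_l\]
is recorded in~\cite{raina} and, with $a=1$, gives $\mathcal{S}^*_{\sqrt{2}-1}\subset \mathcal{S}^*_{\leftmoon}$, hence $R_{\mathcal{S}^*_{\sqrt{2}-1}}(\mathcal{T}_i)\le R_{\mathcal{S}^*_{\leftmoon}}(\mathcal{T}_i)$ for $i=1,2$. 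Substituting $\alpha=\sqrt{2}-1$ in Theorem~\ref{p8thm3.1} then produces precisely the stated quantities
\[\frac{2(1-\alpha)}{5-2\alpha}=\frac{2(2-\sqrt{2})}{7-2\sqrt{2}}=\frac{2(\sqrt{2}-2)}{2\sqrt{2}-7},\qquad \frac{1-\alpha}{3}=\frac{2-\sqrt{2}}{3}.\]

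For the reverse inequalities I would test the extremal functions. At $z=-R_1$, the computation in Theorem~\ref{p8thm3.1} with $\alpha=\sqrt{2}-1$ gives $zf_1'(z)/f_1(z)=(2-5R_1)/(2-2R_1)=\sqrt{2}-1$, and at $z=-R_2$ one similarly finds $zf_2'(z)/f_2(z)=1-3R_2=\sqrt{2}-1$. A direct check gives $|(\sqrt{2}-1)^2-1|=2\sqrt{2}-2=2|\sqrt{2}-1|$, so $\sqrt{2}-1=\varphi(-1)$ lies on $\partial\Omega_l$, being the leftmost real point of the lune. This prevents $R_{\mathcal{S}^*_{\leftmoon}}(\mathcal{T}_i)$ from exceeding $R_i$, and combined with the earlier inequality yields equality.

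The only real obstacle is a clean justification of the disc inclusion. The shortest route is to quote~\cite{raina} directly; otherwise, one parametrises the inner boundary in polar form as $\rho^2(\theta)=(2+\cos 2\theta)-\sqrt{(2+\cos 2\theta)^2-1}$, computes $|\rho(\theta)e^{i\theta}-1|^2=\rho(\theta)^2-2\rho(\theta)\cos\theta+1$, and verifies that this is minimised at $\theta=0$ with value $(2-\sqrt{2})^2$, the outer boundary being strictly farther from $w=1$.
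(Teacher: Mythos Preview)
Your proof is correct and follows essentially the same approach as the paper's: both obtain the lower bound from the disc inclusion $\{w:|w-1|<2-\sqrt{2}\}\subset\Omega_l$ (giving $\mathcal{S}^*_{\sqrt{2}-1}\subset\mathcal{S}^*_{\leftmoon}$ and hence $R_{\mathcal{S}^*_{\sqrt{2}-1}}(\mathcal{T}_i)\le R_{\mathcal{S}^*_{\leftmoon}}(\mathcal{T}_i)$ via Theorem~\ref{p8thm3.1}), and both establish sharpness by checking that $zf_i'(z)/f_i(z)=\sqrt{2}-1\in\partial\Omega_l$ at $z=-R_i$. The only discrepancy is bibliographic: the paper attributes the disc inclusion to Gandhi and Ravichandran~\cite{gan} rather than to~\cite{raina}.
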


\begin{proof}
It was shown by Gandhi and Ravichandran \cite[Lemma 2.1]{gan} that $\{w:|w-a|<1-|\sqrt{2}-a|\}\subseteq \Omega_{l}$ for $\sqrt{2}-1<a\leq  \sqrt{2}+1$. Choosing $a=1$, the inclusion gives $\mathcal{S}^*_{\sqrt{2}-1}\subset \mathcal{S}_{\leftmoon}^{*}$. Thus $R_{\mathcal{S}^* _{\sqrt{2}-1}}(\mathcal{T}_i)\leq R_{\mathcal{S}^*_{\leftmoon}}(\mathcal{T}_i) $ for $i=1,2$. We complete the proof by demonstrating $ R_{\mathcal{S}^*_{\leftmoon}}(\mathcal{T}_i)\leq R_{\mathcal{S}^* _{\sqrt{2}-1}}(\mathcal{T}_i) $ for $i=1,2$.

$(i)$ Evaluating the function $f_{1}(z)=z(1+z)^{3/2}$ at $z=-R=-R_{S^*_{\sqrt{2}-1}}(\mathcal{T}_{1})=-2 (\sqrt{2}-2)/(2 \sqrt{2}-7)$ gives
\[  \left|\left(\frac{z f'_{1}(z)}{f_{1}(z)}\right)^{2}-1\right|=\left|\left(\frac{2+5z}{2+2z}\right)^{2}-1\right|=\left|\left(\frac{2-5R}{2-2R}\right)^{2}-1\right|= 0.828 = 2\left|\frac{z f'_{1}(z)}{f_{1}(z)}\right|. \]
Thus, $R_{S^*_{\leftmoon}}(\mathcal{T}_{1})\leq 2 (\sqrt{2}-2)/(2 \sqrt{2}-7)$.

$(ii)$ Similarly, at $z=-R=-R_{S^*_{\sqrt{2}-1}}(\mathcal{T}_{2})=-(2-\sqrt{2})/3$, the function $f_{2}(z)=ze^{3z}$ yields
\[  \left|\left(\frac{z f'_{2}(z)}{f_{2}(z)}\right)^{2}-1\right|=|(1+3z)^{2}-1|=|(1-3R)^{2}-1| =0.828=2\left|\frac{z f'_{2}(z)}{f_{2}(z)}\right|. \]
This proves that  $R_{S^*_{\leftmoon}}(\mathcal{T}_{2})\leq (2-\sqrt{2})/3.$
\end{proof}

As a further example, consider next the class $\mathcal{S}_{R}^{*}:=\mathcal{S}^*(\eta(z))$, where $\eta(z)=1+((zk+z^2)/(k^2-kz))$, $k=\sqrt{2}+1$. This class associated with a rational function was introduced and studied by Kumar and Ravichandran in \cite{sush}.

\begin{corollary}\label{p8thm3.7}
	The following are the $\mathcal{S}_{R}^{*}$-radius  for the classes $\mathcal{T}_{1}$ and $\mathcal{T}_{2}$:
	\begin{enumerate}[(i)]
		\item $R_{\mathcal{S}_{R}^{*}}(\mathcal{T}_{1})= 2(-3+2 \sqrt{2})/(4 \sqrt{2}-9)\approx 0.102642$,
		\item $R_{\mathcal{S}_{R}^{*}}(\mathcal{T}_{2})=(3-2\sqrt{2})/3.$
	\end{enumerate}
\end{corollary}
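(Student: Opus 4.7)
The plan mirrors the arguments of Corollaries~\ref{p8thm3.3}, \ref{p8thm3.5}, and~\ref{p8thm3.6}: sandwich $R_{\mathcal{S}_{R}^{*}}(\mathcal{T}_{i})$ between matching lower and upper bounds, both arising from the order-$\alpha$ radius of Theorem~\ref{p8thm3.1} for a suitably chosen $\alpha$. The critical value here is
\[
\eta(-1)=1-\frac{k-1}{k(k+1)}=1-\frac{\sqrt{2}}{(\sqrt{2}+1)(\sqrt{2}+2)}=2(\sqrt{2}-1),
\]
the leftmost point of $\eta(\partial\mathbb{D})$. Invoking the disc-inclusion lemma of Kumar and Ravichandran~\cite{sush} at the centre $a=1$ then gives $\{w:|w-1|<3-2\sqrt{2}\}\subseteq\eta(\mathbb{D})$, so that $\mathcal{S}^{*}_{2(\sqrt{2}-1)}\subset\mathcal{S}_{R}^{*}$. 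Consequently $R_{\mathcal{S}^{*}_{2(\sqrt{2}-1)}}(\mathcal{T}_{i})\leq R_{\mathcal{S}_{R}^{*}}(\mathcal{T}_{i})$ for $i=1,2$.

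Next I would specialise Theorem~\ref{p8thm3.1} with $\alpha=2(\sqrt{2}-1)$ to obtain the lower bounds: $R_{\mathcal{S}^{*}_{\alpha}}(\mathcal{T}_{1})=2(1-\alpha)/(5-2\alpha)=2(3-2\sqrt{2})/(9-4\sqrt{2})$, which after multiplying numerator and denominator by $-1$ is $2(2\sqrt{2}-3)/(4\sqrt{2}-9)$; and $R_{\mathcal{S}^{*}_{\alpha}}(\mathcal{T}_{2})=(1-\alpha)/3=(3-2\sqrt{2})/3$.

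For the matching upper bounds, I evaluate each extremal function at $z=-R$. For $\mathcal{T}_{1}$, take $f_{1}(z)=z(1+z)^{3/2}$ and $R=R_{\mathcal{S}^{*}_{2(\sqrt{2}-1)}}(\mathcal{T}_{1})$; then $zf_{1}'(z)/f_{1}(z)=(2-5R)/(2-2R)$ reduces precisely to $2(\sqrt{2}-1)=\eta(-1)$, which lies on $\partial\eta(\mathbb{D})$. Similarly, for $f_{2}(z)=ze^{3z}$ at $z=-(3-2\sqrt{2})/3$ one obtains $zf_{2}'(z)/f_{2}(z)=1-3R=2(\sqrt{2}-1)=\eta(-1)$. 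Since $\eta(-1)\notin\eta(\mathbb{D})$, no radius larger than $R$ can work, establishing $R_{\mathcal{S}_{R}^{*}}(\mathcal{T}_{i})\leq R$ and closing the sandwich.

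The only non-routine step is verifying that the maximal disc centred at $1$ inside $\eta(\mathbb{D})$ has radius exactly $3-2\sqrt{2}=1-\eta(-1)$, i.e.\ that the leftmost real boundary point of $\eta(\mathbb{D})$ governs this inscribed disc. This is precisely the content of the lemma cited from~\cite{sush}; once that is in hand, the remainder of the argument is a direct specialisation of Theorem~\ref{p8thm3.1} together with two endpoint evaluations.
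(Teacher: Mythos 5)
Your proposal is correct and follows essentially the same route as the paper: the Kumar--Ravichandran disc inclusion at $a=1$ gives $\mathcal{S}^{*}_{2(\sqrt{2}-1)}\subset\mathcal{S}_{R}^{*}$, Theorem~\ref{p8thm3.1} with $\alpha=2(\sqrt{2}-1)$ supplies the lower bounds, and evaluating $f_{1}$, $f_{2}$ at $z=-R$ yields the boundary value $2(\sqrt{2}-1)$ for sharpness. Your identification of this value as $\eta(-1)$ is the correct one (the paper's proof writes $\eta(1)$, which is a typo, since $\eta(1)=2$ while $\eta(-1)=2(\sqrt{2}-1)$).
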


\begin{proof}
It was shown in \cite{sush} that $\{w:|w-a|<a-2(\sqrt{2}-1)\}\subseteq \eta(\mathbb{D})$ for $2(\sqrt{2}-1)<a\leq \sqrt{2}$. This inclusion with $a=1$ gives $\mathcal{S}^*_{2(\sqrt{2}-1)}\subset \mathcal{S}_{R}^{*}$. Thus	$R_{\mathcal{S}^* _{2(\sqrt{2}-1)}}(\mathcal{T}_i)\leq R_{\mathcal{S}_{R}^{*}}(\mathcal{T}_i) $ for $i=1,2$. We next show that $ R_{\mathcal{S}_{R}^{*}}(\mathcal{T}_i)\leq R_{\mathcal{S}^* _{2(\sqrt{2}-1)}}(\mathcal{T}_i) $ for $i=1,2$.
	
	$(i)$  At $z=-R=-R_{S^*_{2(\sqrt{2}-1)}}(\mathcal{T}_{1})=-2(-3+2 \sqrt{2})/(4 \sqrt{2}-9)$, the function $f_{1}(z)=z(1+z)^{3/2}$  yields
	 \[ \frac{z f'_{1}(z)}{f_{1}(z)}=\frac{2-5R}{2-2R} = 2(\sqrt{2}-1)=\eta(1).\]
	Thus, $R_{\mathcal{S}_{R}^{*}}(\mathcal{T}_{1})\leq 2(-3+2 \sqrt{2})/(4 \sqrt{2}-9)$.
	
	$(ii)$     Evaluating $f_{2}(z)=ze^{3z}$ at $z=-R=-R_{S^*_{2(\sqrt{2}-1)}}(\mathcal{T}_{2})=-(3-2\sqrt{2})/3$ gives
	\[  \frac{z f'_{2}(z)}{f_{2}(z)} = 1-3R =2(\sqrt{2}-1)=\eta(1).\]
	Thus $R_{\mathcal{S}_{R}^{*}}(\mathcal{T}_{2})\leq (3-2\sqrt{2})/3.$
\end{proof}

	%$(i)$ Define the function $\sigma$ by $\sigma(r)=(2-5r)/(2-2r)$, $0\leq r <1$. Let $R=R_{S^*_R}(\mathcal{T}_{1})$ be the root of the equation $\sigma(r)=2(\sqrt{2}-1)$. Using the fact that $\sigma$ is a decreasing function along with ~\eqref{p8eqn2.4}, for $f \in \mathcal{T}_{1}$ and $0 < r \leq R_{S^*_R}(\mathcal{T}_{1})$, we get $\sigma(r) \geq \sigma(R)=2(\sqrt{2}-1)$ and
	%\[\left|\frac{z f'(z)}{f(z)}-1 \right| \leq \frac{3r}{2(1-r)} \leq 1-2(\sqrt{2}-1).\]
	%The result now follows from ~\eqref{p8eqn3.7}. For the function $f_{1}(z)=z(1+z)^{3/2}$ at $z=-R=-R_{S^*_R}(\mathcal{T}_{1})$, we have
	%\[ \left|\frac{z f'_{1}(z)}{f_{1}(z)}\right|=\left|\frac{2-5R}{2-2R}\right|= 2(\sqrt{2}-1)=\eta(1).\]
	%Thus, the radius is sharp.
	%
	%$(ii)$  Define the function $\omega$ by $\omega(r)=1-3r$, $0\leq r <1$. Let $R=R_{\mathcal{S}^*_R}(\mathcal{T}_{2})$ be the root of the equation $\omega(r)=2(\sqrt{2}-1)$. Using the fact that $\omega$ is a decreasing function along with ~\eqref{p8eqn2.7}, for $f \in \mathcal{T}_{2}$ and $0 < r \leq R_{\mathcal{S}^*_R}(\mathcal{T}_{2})<\sqrt{2}-1$, we get $\omega(r) \geq \omega(R)=2(\sqrt{2}-1)$ and
%\[\left|\frac{z f'(z)}{f(z)}-1\right| \leq 3r \leq 1-2(\sqrt{2}-1).\]
%The result now follows from ~\eqref{p8eqn3.7}. For the function $f_{2}(z)=ze^{3z}$ at $z=-R=-R_{\mathcal{S}^*_R}(\mathcal{T}_{2})$, we have
%\[ \left|\frac{z f'_{2}(z)}{f_{2}(z)}\right|=|1-3R|=2(\sqrt{2}-1)=\eta(1).\]
%This proves that the radius is sharp.

The class $\mathcal{S}^{*}_{N_e}:=\mathcal{S}^*(\psi(z))$, where $\psi(z)=1+z-z^{3}/3$, was introduced and studied by Wani and Swaminathan in \cite{wani}. Geometrically, $f\in \mathcal{S}^{*}_{N_e}$ provided $zf'(z)/f(z)$ lies in the region bounded by the nephroid: a 2-cusped kidney shaped curve $\Omega_{N_e} :=\{w=u+iv: ((u-1)^{2}+v^{2}-4/9)^{3}-4v^{2}/3=0\}$.

\begin{corollary}\label{p8thm3.8}
	The following are the $\mathcal{S}^{*}_{N_e}$-radius for the classes $\mathcal{T}_{1}$ and $\mathcal{T}_{2}$:
	\begin{enumerate}[(i)]
		\item $R_{\mathcal{S}^{*}_{N_e}}(\mathcal{T}_{1})= 4/13$,
		\item $R_{\mathcal{S}^{*}_{N_e}}(\mathcal{T}_{2})=2/9.$
	\end{enumerate}
\end{corollary}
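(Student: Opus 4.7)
My plan is to mirror the template already established in Corollaries~\ref{p8thm3.3}--\ref{p8thm3.7}: bracket $R_{\mathcal{S}^{*}_{N_e}}(\mathcal{T}_{i})$ between two copies of $R_{\mathcal{S}^{*}_{1/3}}(\mathcal{T}_{i})$ using a disc-inclusion lemma for the lower side and the extremal functions $f_{1},f_{2}$ for the upper side. The key enabling input, from Wani and Swaminathan~\cite{wani}, is the disc inclusion
\[\{w:|w-a|<a-1/3\}\subseteq \Omega_{N_e}\]
valid for a range of $a$ containing $a=1$. Setting $a=1$ yields $\mathcal{S}^{*}_{1/3}\subset \mathcal{S}^{*}_{N_e}$, whence $R_{\mathcal{S}^{*}_{1/3}}(\mathcal{T}_{i})\leq R_{\mathcal{S}^{*}_{N_e}}(\mathcal{T}_{i})$ for $i=1,2$. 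Substituting $\alpha=1/3$ into Theorem~\ref{p8thm3.1} supplies the lower bounds $R_{\mathcal{S}^{*}_{1/3}}(\mathcal{T}_{1})=4/13$ and $R_{\mathcal{S}^{*}_{1/3}}(\mathcal{T}_{2})=2/9$.

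For the matching upper bounds, the crucial arithmetic observation is that $\psi(-1)=1-1+1/3=1/3$, so the point $1/3$ lies on the boundary $\partial\psi(\D)$ of the nephroid region. I would then evaluate the extremal function $f_{1}(z)=z(1+z)^{3/2}$ at $z=-4/13$, where
\[\frac{zf_{1}'(z)}{f_{1}(z)}=\frac{2+5z}{2+2z}=\frac{1}{3}=\psi(-1),\]
and likewise $f_{2}(z)=ze^{3z}$ at $z=-2/9$, where $zf_{2}'(z)/f_{2}(z)=1+3z=1/3=\psi(-1)$. In each case $zf'(z)/f(z)$ attains a boundary value of $\psi(\D)$ at an interior point, so the subordination $zf'(z)/f(z)\prec\psi(z)$ cannot persist on any disc of larger radius, yielding $R_{\mathcal{S}^{*}_{N_e}}(\mathcal{T}_{1})\leq 4/13$ and $R_{\mathcal{S}^{*}_{N_e}}(\mathcal{T}_{2})\leq 2/9$.

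There is no genuine obstacle here; the only non-routine ingredient, the disc-inclusion lemma, is already proved in~\cite{wani}, and the remaining steps are bookkeeping and arithmetic. A clean explanation for the numerical coincidence with Corollary~\ref{p8thm3.3} is that the leftmost nephroid value $\psi(-1)=1/3$ agrees precisely with the leftmost cardioid value $\varphi_{CAR}(-1)=1-4/3+2/3=1/3$; since the two superordinate functions share the same left-edge, the optimal $\mathcal{S}^{*}_{\alpha}$-inclusion sandwich pinches at $\alpha=1/3$ in both cases, and Theorem~\ref{p8thm3.1} then delivers identical radii.
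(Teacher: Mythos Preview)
Your proposal is correct and follows essentially the same approach as the paper: invoke the disc inclusion $\{w:|w-a|<a-1/3\}\subseteq\Omega_{N_e}$ from \cite{wani} at $a=1$ to get the lower bound via $\mathcal{S}^{*}_{1/3}\subset\mathcal{S}^{*}_{N_e}$ and Theorem~\ref{p8thm3.1}, and then use the extremal functions $f_{1},f_{2}$ at $z=-4/13$ and $z=-2/9$ together with $\psi(-1)=1/3$ for sharpness. The explanatory remark linking the coincidence with Corollary~\ref{p8thm3.3} to the shared left-edge value $\psi(-1)=\varphi_{CAR}(-1)=1/3$ is a nice addition not present in the paper.
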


\begin{proof}
It was shown in \cite{wani} that $\{w:|w-a|<a-1/3\}\subseteq \Omega_{N_e}$ for $1/3<a\leq 1$. This inclusion with $a=1$ gives $\mathcal{S}^*_{1/3}\subset \mathcal{S}_{N_e}^{*}$. This shows that $R_{\mathcal{S}^* _{1/3}}(\mathcal{T}_i)\leq R_{\mathcal{S}^*_{N_e}}(\mathcal{T}_i) $ for $i=1,2$. We next show that $ R_{\mathcal{S}^*_{N_e}}(\mathcal{T}_i)\leq R_{\mathcal{S}^* _{1/3}}(\mathcal{T}_i) $ for $i=1,2$.
	
	 $(i)$  Evaluating the function $f_{1}(z)=z(1+z)^{3/2}$ at $z=-R=-R_{S^*_{1/3}}(\mathcal{T}_{1})=-4/13$ results in
	 \[   \frac{z f'_{1}(z)}{f_{1}(z)}=\frac{2-5R}{2-2R}= \frac{1}{3}=\psi(-1).\]
	 Thus, $R_{S^*_{N_e}}(\mathcal{T}_{1})\leq 4/13$.
	
	 $(ii)$ Similarly, evaluating $f_{2}(z)=ze^{3z}$ at $z=-R=-R_{S^*_{1/3}}(\mathcal{T}_{2})=-2/9$ yields
	\[  \frac{z f'_{2}(z)}{f_{2}(z)}=1-3R=\frac{1}{3}=\psi(-1).\]
	 This proves that  $R_{S^*_{N_e}}(\mathcal{T}_{2})\leq 2/9.$
\end{proof}

	%$(i)$ Define the function $\sigma$ by $\sigma(r)=(2-5r)/(2-2r)$, $0\leq r <1$. Let $R=R_{S^*_{N_e}}(\mathcal{T}_{1})$ be the root of the equation $\sigma(r)=1/3$. Using the fact that $\sigma$ is a decreasing function along with ~\eqref{p8eqn2.4}, for $f \in \mathcal{T}_{1}$ and $0 < r \leq R_{S^*_{N_e}}(\mathcal{T}_{1})$, we get $\sigma(r) \geq \sigma(R)=1/3$ and
	%\[\left|\frac{z f'(z)}{f(z)}-1 \right| \leq \frac{3r}{2(1-r)} \leq \frac{2}{3}.\]
	%The result now follows from ~\eqref{p8eqn3.8}. For the function $f_{1}(z)=z(1+z)^{3/2}$ at $z=-R=-R_{S^*_{N_e}}(\mathcal{T}_{1})$, we have
	%\[ \left|\frac{z f'_{1}(z)}{f_{1}(z)}\right|=\left|\frac{2-5R}{2-2R}\right|= \frac{1}{3}=\psi(-1).\]
	%Thus, the radius is sharp.
	%$(ii)$  Define the function $\omega$ by $\omega(r)=1-3r$, $0\leq r <1$. Let $R=R_{\mathcal{S}^*_{N_e}}(\mathcal{T}_{2})$ be the root of the equation $\omega(r)=1/3$. Using the fact that $\omega$ is a decreasing function along with ~\eqref{p8eqn2.7}, for $f \in \mathcal{T}_{2}$ and $0 < r \leq R_{\mathcal{S}^*_{N_e}}(\mathcal{T}_{2})<\sqrt{2}-1$, we get $\omega(r) \geq \omega(R)=1/3$ and
%\[\left|\frac{z f'(z)}{f(z)}-1\right| \leq 3r \leq \frac{2}{3}.\]
%The result now follows from ~\eqref{p8eqn3.8}. For the function $f_{2}(z)=ze^{3z}$ at $z=-R=-R_{\mathcal{S}^*_{N_e}}(\mathcal{T}_{2})$, we have
%\[ \left|\frac{z f'_{2}(z)}{f_{2}(z)}\right|=|1-3R|=\frac{1}{3}=\psi(-1).\]
%This proves that the radius is sharp.

Finally, we consider the class $\mathcal{S}^{*}_{SG}:=\mathcal{S}^*(2/(1+e^{-z}))$ introduced by Goel and Kumar in \cite{goel}. Here $2/(1+e^{-z})$ is the modified sigmoid function that maps $\mathbb{D}$ onto the region $\Omega_{SG} :=\{w=u+iv: \left| \log (w/(2-w)) \right|<1\}$. Thus, $f\in \mathcal{S}^{*}_{SG}$ provided the function $zf'(z)/f(z)$ maps $\mathbb{D}$ onto the region lying inside the domain $\Omega_{SG}$.

\begin{corollary}\label{p8thm3.9}
	The $\mathcal{S}^{*}_{SG}$-radius for the class $\mathcal{T}_{1}$ is
\[R_{\mathcal{S}^{*}_{SG}}(\mathcal{T}_{1})= (2e-2)/(1+5e)\approx 0.23552,\]
while that of $\mathcal{T}_{2}$ is
\[R_{\mathcal{S}^{*}_{SG}}(\mathcal{T}_{2})=(e-1)/(3(1+e)).\]
	%\begin{enumerate}[(i)]
%		\item $R_{\mathcal{S}^{*}_{SG}}(\mathcal{T}_{1})= (2e-2)/(1+5e)\approx 0.23552$,
%		\item $R_{\mathcal{S}^{*}_{SG}}(\mathcal{T}_{2})=(e-1)/(3(1+e)).$
%	\end{enumerate}
\end{corollary}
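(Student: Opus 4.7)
The plan is to mirror the template of Corollaries~\ref{p8thm3.4}--\ref{p8thm3.8}: locate the largest $\alpha$ for which $\mathcal{S}^*_\alpha\subset\mathcal{S}^*_{SG}$ via a disc inclusion inside $\Omega_{SG}$, translate this into a lower bound on the $\mathcal{S}^*_{SG}$-radius of $\mathcal{T}_i$ through Theorem~\ref{p8thm3.1}, and then pin down the matching upper bound by exhibiting a boundary point of $\Omega_{SG}$ reached by the extremal function $f_1$ or $f_2$.

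For the inclusion step, I would invoke the disc-containment result of Goel and Kumar~\cite{goel}, namely $\{w:|w-a|<a-2/(1+e)\}\subseteq\Omega_{SG}$ on an appropriate range of $a$ that includes $a=1$. The choice $a=1$ produces $\{w:|w-1|<(e-1)/(1+e)\}\subseteq\Omega_{SG}$, and hence $\mathcal{S}^*_{2/(1+e)}\subset\mathcal{S}^*_{SG}$. Setting $\alpha=2/(1+e)$ in Theorem~\ref{p8thm3.1} gives
\[R_{\mathcal{S}^*_\alpha}(\mathcal{T}_1)=\frac{2(1-\alpha)}{5-2\alpha}=\frac{2e-2}{1+5e},\qquad R_{\mathcal{S}^*_\alpha}(\mathcal{T}_2)=\frac{1-\alpha}{3}=\frac{e-1}{3(1+e)},\]
so that $R_{\mathcal{S}^*_{SG}}(\mathcal{T}_i)\geq R_{\mathcal{S}^*_{2/(1+e)}}(\mathcal{T}_i)$ already reaches the stated values from below.

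The matching upper bounds follow from the observation that $2/(1+e^{-z})$ sends $z=-1$ to the leftmost real boundary point $2/(1+e)$ of $\Omega_{SG}$. Evaluating $f_1(z)=z(1+z)^{3/2}$ at $z=-R_1$ with $R_1=(2e-2)/(1+5e)$ yields $zf_1'(z)/f_1(z)=(2-5R_1)/(2-2R_1)$, which by a short algebraic check equals $2/(1+e)\in\partial\Omega_{SG}$; similarly, $f_2(z)=ze^{3z}$ at $z=-R_2=-(e-1)/(3(1+e))$ gives $zf_2'(z)/f_2(z)=1-3R_2=2/(1+e)\in\partial\Omega_{SG}$. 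Consequently the radii cannot be enlarged, and both equalities follow.

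I do not anticipate a real obstacle: the entire argument collapses to verifying that the radius $\alpha=2/(1+e)$ produced by the Goel--Kumar disc bound agrees, through Theorem~\ref{p8thm3.1}, with the first real parameter at which $zf_i'(z)/f_i(z)$ meets $\partial\Omega_{SG}$. Since the previous corollaries have the identical structure, the only delicate point is quoting the correct inscribed-disc inequality from \cite{goel}; once this is in hand, the two-sided radius identity follows by routine substitution.
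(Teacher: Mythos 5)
Your proposal is correct, and its first half coincides with the paper's: both deduce $\mathcal{S}^*_{2/(1+e)}\subset\mathcal{S}^*_{SG}$ from the Goel--Kumar disc inclusion at $a=1$ and then read off the lower bound from Theorem~\ref{p8thm3.1}. (Minor caveat: the inclusion in \cite{goel} is $\{w:|w-a|<\tfrac{e-1}{e+1}-|a-1|\}\subseteq\Omega_{SG}$, not $\{w:|w-a|<a-\tfrac{2}{1+e}\}$; the two agree only at $a=1$, which is the only value you use, so nothing breaks.) Where you diverge is the upper bound. The paper quotes the second inclusion from \cite{goel}, namely $\mathcal{S}^*_{SG}\subset\mathcal{S}^*(\alpha)$ for $\alpha\leq 2/(e+1)$, so that $R_{\mathcal{S}^*_{2/(e+1)}}(\mathcal{T}_i)\leq R_{\mathcal{S}^*_{SG}}(\mathcal{T}_i)\leq R_{\mathcal{S}^*(2/(e+1))}(\mathcal{T}_i)$, and the two outer radii coincide by Theorem~\ref{p8thm3.1}. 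You instead evaluate the extremal functions: $zf_1'(z)/f_1(z)=(2-5R_1)/(2-2R_1)=2/(1+e)$ at $z=-R_1$ and $1-3R_2=2/(1+e)$ at $z=-R_2$, and use that $\Omega_{SG}\cap\mathbb{R}=(2/(1+e),2e/(1+e))$ together with the monotonicity of $(2-5r)/(2-2r)$ and $1-3r$ to conclude the value exits $\Omega_{SG}$ for $r>R_i$. This is exactly the sharpness device the paper itself uses in Corollaries~\ref{p8thm3.3}--\ref{p8thm3.8}, and your arithmetic checks out; your route is marginally more self-contained (it needs only the one inscribed-disc lemma from \cite{goel} plus an explicit boundary computation), while the paper's is shorter because it outsources the upper bound to a second quoted inclusion.
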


\begin{proof}
The inclusion $\{w:|w-a|<((e-1)/(e+1))-|a-1|\}\subseteq \Omega_{SG}$ holds for $2/(1+e)<a<2e/(1+e)$ (see \cite{goel}). At $a=1$, the set inclusion shows that $\mathcal{S}^*_{2/(e+1)}\subset \mathcal{S}_{SG}^{*}$. It was also shown in \cite{goel} that $\mathcal{S}_{SG}^{*}\subset \mathcal{S}^* (\alpha)$ for $0 \leq \alpha \leq 2/(e+1)$. The desired result is now an immediate consequence of Theorem~\ref{p8thm3.1}.
\end{proof}

\section*{Acknowledgment} The first author gratefully acknowledge support from a USM research university grant 1001.PMATHS.8011101.


\begin{thebibliography}{99}
	
%\bibitem{abdul2}  Z. Abdulhadi\ and\ W. Hengartner, Spirallike logharmonic mappings, Complex Variables Theory Appl. {\bf 9} (1987), no.~2-3, 121--130.
%
%\bibitem{abdul} Z. Abdulhadi\ and\ Y. Abu Muhanna, Starlike log-harmonic mappings of order $\alpha$, J. Inequal. Pure Appl. Math. {\bf 7} (2006), no.~4, Article 123, 6 pp. %MR2268577
%
% \bibitem{abdul3}  Z. AbdulHadi\ and\ R. M. Ali, On rotationally starlike logharmonic mappings, Math. Nachr. {\bf 288} (2015), no.~7, 723--729.
%
%\bibitem{abdul4}    Z. Abdulhadi, N. M. Alarifi\ and\ R. M. Ali, Logharmonic mappings with typically real analytic components, Bull. Korean Math. Soc. {\bf 55} (2018), no.~6, 1783--1789.
%
%\bibitem{abdul5}    Z. Abdulhadi\ and\ L. El Hajj, Stable geometrical properties of logharmonic mappings, Complex Var. Elliptic Equ. {\bf 63} (2018), no.~6, 854--870.
	
\bibitem{nav} R. M. Ali, N. K. Jain\ and\ V. Ravichandran, Radii of starlikeness associated with the lemniscate of Bernoulli and the left-half plane, Appl. Math. Comput. {\bf 218} (2012), no.~11, 6557--6565. %MR2879136 (2012k:30023)

\bibitem{ali2}  R. M. Ali, N. K. Jain\ and\ V. Ravichandran, On the radius constants for classes of analytic functions,  Bull.  Malays.  Math.  Sci.  Soc. (2) {\bf 36} (2013), no.~1, 23--38.

%\bibitem{ali3}  R. M. Ali, Z. Abdulhadi\ and\ Z. C. Ng, The Bohr radius for starlike logharmonic mappings, Complex Var. Elliptic Equ. {\bf 61} (2016), no.~1, 1--14.
%
%\bibitem{ayd}    M. Aydo\~{g}an\ and\ Y. Polato\~{g}lu, A certain class of starlike log-harmonic mappings, J. Comput. Appl. Math. {\bf 270} (2014), 506--509.
%
%\bibitem{chen} X. Chen\ and\ T. Qian, Non-stretch mappings for a sharp estimate of the Beurling-Ahlfors operator, J. Math. Anal. Appl. {\bf 412} (2014), no.~2, 805--815.

\bibitem{viren} N. E. Cho, V. Kumar, S. S. Kumar and V. Ravichandran, Radius problems for starlike functions associated with the sine function, Bull. Iranian Math. Soc. {\bf 45} (2019), no.~1, 213--232. %MR3913990

\bibitem{duren}P. L. Duren, {\it Univalent Functions}, GTM 259, Springer-Verlag, New York, 1983.

\bibitem{gan} S. Gandhi and  Ravichandran, V. Starlike functions associated with a lune. Asian-Eur. J. Math. 10 (2017), no. 4, 1750064, 12 pp. %MR3718233

\bibitem{goel} P. Goel\ and\ S. Sivaprasad Kumar, Certain class of starlike functions associated with modified sigmoid function, Bull. Malays. Math. Sci. Soc. {\bf 43} (2020), no.~1, 957--991. %MR4044913

\bibitem{goodman}A. W. Goodman, {\it Univalent Functions. Vol. II}, Mariner, Tampa, FL, 1983. %MR0704184 (85j:30035b)

\bibitem{jano}W. Janowski, Extremal problems for a family of functions with positive real part and for some related families, Ann. Polon. Math. {\bf 23} (1970/1971), 159--177.% MR0267103 (42 \#2005)

\bibitem{kaplan} W.\ Kaplan, Close-to-convex schlicht functions, \emph{Michigan Math.\ J.} {\bf 1} (1952), 169--185 (1953).

\bibitem{sush} S. Kumar\ and\ V. Ravichandran, A subclass of starlike functions associated with a rational function, Southeast Asian Bull. Math. {\bf 40} (2016), no.~2, 199--212. %MR3496681

\bibitem{mac1}T. H. MacGregor, The radius of convexity for starlike functions of order  $1/2$,  Proc. Amer. Math. Soc. {\bf 14} (1963), 71--76. %MR0150282 (27 \#283)

\bibitem{mac4}T. H. MacGregor, The radius of univalence of certain analytic functions, Proc. Amer. Math. Soc. {\bf 14} (1963), 514--520.% MR0148891 (26 \#6388)

\bibitem{mac3}T. H. MacGregor, The radius of univalence of certain analytic functions. II, Proc\ Amer. Math. Soc. {\bf 14} (1963), 521--524.% MR0148892 (26 \#6389)

\bibitem{mac2}T. H. MacGregor, A class of univalent functions, Proc. Amer. Math. Soc. {\bf 15} (1964), 311--317.% MR0158985 (28 \#2206

\bibitem{mamin2}W. C. Ma\ and\ D. Minda, A unified treatment of some special classes of univalent functions, in {\it Proceedings of the Conference on Complex Analysis (Tianjin, 1992)}, 157--169, Conf. Proc. Lecture Notes Anal., I Int. Press, Cambridge, MA.% MR1343506 (96g:30027)

\bibitem{men1}R. Mendiratta, S. Nagpal\ and\ V. Ravichandran, On a subclass of strongly starlike functions associated with exponential function, Bull. Malays. Math. Sci. Soc. {\bf 38} (2015), no.~1, 365--386. %MR3394060

\bibitem{pot}Y. Polato\u glu\ and\ M. Bolcal, Some radius problem for certain families of analytic functions, Turkish J. Math. {\bf 24} (2000), no.~4, 401--412. %MR1803822 (2002h:30018)

\bibitem{raina} R. K. Raina\ and\ J. Sok\'{o}\l, Some properties related to a certain class of starlike functions, C. R. Math. Acad. Sci. Paris {\bf 353} (2015), no.~11, 973--978. %MR3419845

\bibitem{ratti68}J.\ S.\ Ratti, The radius of univalence of certain analytic functions, \emph{Math.\ Z.} {\bf 107} (1968), 241--248.\ %MR0236373 (38 \#4669)

\bibitem{ratti70}J.\ S.\ Ratti, The radius of convexity of certain analytic functions,\emph{ Indian J.\ Pure Appl.\ Math.} {\bf 1} (1970), no.~1, 30--36.

\bibitem{sharma}V. Ravichandran and K. Sharma, Sufficient conditions for starlikeness, J. Korean Math. Soc. {\bf 52} (2015), no.~4, 727–-749.

\bibitem{reade} M.\ O.\ Reade, On close-to-close univalent functions, \emph{Michigan Math.\ J.} {\bf 3} (1955), 59--62.

\bibitem{rob}M. S. Robertson, Certain classes of starlike functions, Michigan Math. J. {\bf 32} (1985), no.~2, 135--140.% MR0783568 (86i:30016)

\bibitem{ron} F. R{\o}nning, Uniformly convex functions and a corresponding class of starlike functions, Proc. Amer. Math. Soc. 118 (1993), no. 1, 189--196. % MR1128729 (93f:30017)

\bibitem{vravicmft}T. N. Shanmugam\ and\ V. Ravichandran, Certain properties of uniformly convex functions,   Computational Methods And Function Theory 1994 (Penang), 319--324, Ser. Approx. Decompos., 5 World Sci. Publ., River Edge, NJ. %MR1415180 (97i:30020)

\bibitem{sharma3}K. Sharma, N.\ E.\ Cho and V.\ Ravichandran, Sufficient conditions for strong starlikeness, Bull. Iranian Math. Soc. (2020), DOI:10.1007/s41980-020-00452-z.

\bibitem{jain}K. Sharma, N. K. Jain and V. Ravichandran, Starlike functions associated with a cardioid, Afr. Mat. (Springer) {\bf 27} (2016), no.~5, 923--939.

\bibitem{sharma2}K. Sharma and V. Ravichandran, Applications of subordination theory to starlike functions, Bull. Iranian Math. Soc. {\bf 42} (2016), no.~3, 761--777.

\bibitem{sharma1}K. Sharma and V. Ravichandran, Sufficient conditions for Janowski starlike functions, Stud. Univ. Babe\c s-Bolyai Math. {\bf 61} (2016), no.~1, 63--76.

%\bibitem{sharma7}K. Sharma, R. Aghalary and V. Ravichandran, Briot--Bouquet differential subordination and bernardi's integral operator, communicated.

\bibitem{silver}H. Silverman and E. M. Silvia,
Subclasses of starlike functions subordinate to convex functions.
Canad. J. Math. \textbf{37}(1985), no. 1, 48--61.


\bibitem{wani} L. A. Wani and A. Swaminathan, Starlike and convex functions associated with a nephroid domain, Bull. Malays. Math. Sci. Soc. (2020). https://doi.org/10.1007/s40840-020-00935-6

%\bibitem{pola} Y. Polato\~{g}lu, E. Yavuz Duman\ and\ H. Esra \"{O}zkan, Growth theorems for perturbated starlike log-harmonic mappings of complex order, Gen. Math. {\bf 17} (2009), no.~4, 185--193.

 \end{thebibliography}
\end{document}